\documentclass{amsart}
\usepackage[margin=1in]{geometry}
\usepackage{mathrsfs,nicefrac,graphicx,float,cancel,tikz,esdiff,mathtools,bm}
\usepackage{lipsum}
\usepackage{float,graphicx,subcaption}
\usepackage{enumitem,hyperref,nameref}
\usepackage{subfiles}
\usepackage{bigints}
\usepackage[toc,page]{appendix}
\usepackage[utf8]{inputenc}

\usepackage{bigints}

\theoremstyle{plain}
\newtheorem{theorem}{Theorem}
\numberwithin{theorem}{section}

\newtheorem{lemma}[theorem]{Lemma}
\newtheorem{prop}[theorem]{Proposition}

\newtheorem{corollary}[theorem]{Corollary}

\makeatletter
\newtheorem*{rep@theorem}{\rep@title}
\newcommand{\newreptheorem}[2]{%
\newenvironment{rep#1}[1]{%
 \def\rep@title{#2 \ref{##1}}%
 \begin{rep@theorem}}%
 {\end{rep@theorem}}}
\makeatother

\newreptheorem{theorem}{Theorem}
\newreptheorem{lemma}{Lemma}
\newreptheorem{prop}{Proposition}

\DeclareMathOperator{\arcsinh}{arcsinh}

\DeclareMathOperator{\sech}{sech}
\DeclareMathOperator{\sgn}{sgn}
\DeclareMathOperator{\sinc}{sinc}

\DeclareMathOperator{\Rez}{Re}
\DeclareMathOperator{\Imz}{Im}

\theoremstyle{definition}
\newtheorem{definition}[theorem]{Definition}

\theoremstyle{remark}
\newtheorem*{remark}{Remark}

\def\thmheadbrackets#1#2#3{%
	\thmname{#1}\thmnumber{\@ifnotempty{#1}{ }\@upn{#2}}%
	\thmnote{ {\the\thm@notefont[#3]}}}
\makeatother

\newtheoremstyle{brakets}
{}
{}
{\itshape}
{}
{\bfseries}
{.}
{ }
{\thmheadbrackets{#1}{#2}{#3}}

\theoremstyle{brakets}

\newcommand{\R}{\mathbb{R}}
\newcommand{\Z}{\mathbb{Z}}
\newcommand{\C}{\mathbb{C}}
\newcommand{\N}{\mathbb{N}}

\newcommand{\vep}{\varepsilon}

\title{Complex Methods in the Asymptotics of M\"obius energy integrals of helix curves}
\author{Max Lipton}

\newcommand{\Addresses}{{
  \bigskip

  Max Lipton, \textsc{Department of Mathematics, Massachusetts Institute of Technology}\par\nopagebreak
  \textit{E-mail address}: \texttt{liptonm@mit.edu}

}}

\begin{document}

\begin{abstract} The M\"obius energy of a curve is a topic of interest to physical knot theorists, harmonic analysts, and geometric analysts. In particular, there are many open questions regarding the gradient flow and critical points. The Gateaux derivative indicates the variation is dependent on curvature and torsion, and so to better understand the gradient, we investigate the family of helix curves, where the ratio of torsion to curvature is a constant proportional to the pitch.

We fix the radius of the helix, and study the coiling in both directions: as the helix unravels to a straight line, and as it coils infinitely tight. Specifically, we study the arclength-rescaled M\"obius energy density, which is a naturally tractable quantity under the M\"obius energy's chord-arc comparison of inverse-square laws. 

The asymptotics of the uncoiling helix, corresponding to an energy decay, can be proven with a short chain of estimates. However, the asymptotics of the helix as it coils infinitely tight, blowing up the energy, is a much more involved calculation. Our strategy for calculating the asymptotics, initially reminiscent of the work by Kim-Kusner, begins with a meromorphic extension of the integrand. However, proving the asymptotic equivalence requires a fundamentally distinct strategy because our integrand has infinitely many poles. 

\end{abstract}

\subjclass[2020]{53A04, 30E20, 57K10}
\keywords{M\"obius energy, helix, complex asymptotics, knot energies, physical knot theory, coiling, curves}

\maketitle
\section{Introduction}
\subsection{An abbreviated summary of the M\"obius energy}
~\\
A fundamental question in knot theory asks ``What is the best way to tie a knot?" Its solution would be a corollary to two fundamental questions of geometry, which ask ``What is the best shape?" and ``How do you construct it?" To resolve these questions, there are quantities known as energy functionals which geometers use to compare shapes. Ideally, these energies act as topological barriers, indicating a qualitative distinction between shapes. 

A simple example of a topological barrier is the integral of curvature. The F\`ary-Milnor theorem states that if $\gamma \subseteq \R^3$ is a $C^2$ simple closed curve such that $\int_{\gamma} \kappa(s)ds < 4\pi$, then $\gamma$ is unknotted. However, this theorem has limited utility because it does not distinguish curves with nontrivial knot type, nor does it actually provide the algorithm for untangling a knot with low energy.

The account of Simon gives a thorough overview of several different knot energies, discussing their various strengths and weaknesses \cite{simon02}. An example of a more refined topological barrier is the M\"obius energy, first introduced as part of a family of functionals by O'Hara \cite{ohara91}, and later expounded upon by Freedman-He-Wang in their seminal paper \cite{freedman1994mobius}.  

Let $j,p \geq 1$, $I$ be an interval or a circle and let $\gamma: I \to \R^n$ parametrize a $C^2$ curve. For $s \in I$, define the pointwise energy, also referred to as the energy element or energy density, as
\begin{subequations}
\begin{align}
\label{ejp}
    E^{j,p}(\gamma,s) &\coloneq \int_I \left( \frac{1}{|\gamma(t) - \gamma(s)|^j} - \frac{1}{D(\gamma(t),\gamma(s))^j} \right)^p |\dot{\gamma}(t)|dt,
\end{align}
where $D$ denotes the intrinsic distance along $\gamma$. The $(j,p)$-O'Hara energy of $\gamma$ is given by
\begin{align}
    E^{j,p}(\gamma) \coloneq \int_I E^{j,p}(\gamma,s)|\dot{\gamma}(s)|ds.
\end{align}
\end{subequations}

Since the extrinsic Euclidean distance minimizes the intrinsic distance on $\gamma$, the integrand in \eqref{ejp} is nonnegative. The arclength elements ensure the energy is invariant under reparametrizations of $\gamma$. As the quantity $E^{j,p}(\gamma,s)|\dot{\gamma}(s)|$ which is integrated, we will see that this arclength-rescaled energy density is the more natural quantity to analyze.

We will exclusively work with $E^{2,1}$ for curves in $\R^3$, suppressing the superscripts. This configuration of parameters, known as the M\"obius energy, is special. First of all, the M\"obius energy is now invariant under Euclidean similarities. Perhaps the central property of the M\"obius energy is that it is finite for simple closed curves, but blows up for self-intersecting curves. Therefore, the gradient descent precludes changes in knot type via transverse strand crossing. A ``pull-tight" singularity in the limit of an energy minimizing sequence, where a nontrivial part of a composite knot ceases to exist, could still occur. Another central result of Freedman-He-Wang is that prime knot types have M\"obius-minimizing parametrizations. 

The M\"obius energy has a coherent history, but we do not intend to provide a full account here. Our understanding of the M\"obius energy over the past thirty years is characterized by a sequence of increasingly strong regularity theorems. In \cite{freedman1994mobius}, Freedman-He-Wang show that if $E(\gamma) < \infty$, then $\gamma$ is $C^{1,1}$. That is, $\gamma$ is $C^1$ with Lipschitz derivative. In \cite{he2000}, He expands on this result by proving critical points are $C^\infty$. In a subsequent paper, Blatt shows that $E(\gamma) < \infty$ is equivalent to the condition that $\gamma$ is a simple curve in the fractional Sobolev space $H^{\frac{3}{2}}(I,\R^n)$ \cite{blatt12}. Recently in \cite{blatt20}, Blatt applies the Cauchy method of majorants to yield the very strong result that if $\gamma$ is a critical point of the M\"obius energy, then $\gamma$ is real-analytic. This is the only analyticity result of a nonlocal differential operator to the author's knowledge. 

Our choice to study helix curves originates from a non-rigorous heuristic observation regarding the first variation. In \cite{freedman1994mobius}, the authors calculate the Gateaux derivative of the M\"obius energy at $\gamma$, which is a vector field along $\gamma$, as 
\begin{align}
    \label{mobius-gradient}
    \nabla E(\gamma) (t) = 2 \int_{\gamma} \left[ \frac{2 \text{proj}_{\dot{\gamma}(s)^{\perp}}(\gamma(s) - \gamma(t))}{|\gamma(s)-\gamma(t)|^2} - \kappa \textbf{N}_{\gamma(t)}\right] \frac{|\dot{\gamma}(s)|}{|\gamma(s)-\gamma(t)|^2}ds.
\end{align}
Here, $\text{proj}_{\dot{\gamma}(s)^\perp}$ denotes projection onto the plane orthogonal to the tangent $\dot{\gamma}$ and $\{\textbf{T},\textbf{N},\textbf{B}\}$ denotes the Frenet frame. At a fixed $t$, this integral instructs us to take the difference vector $\gamma(s) - \gamma(t)$, project away $\textbf{T}$, subtract away the normal curvature vector $\kappa\textbf{N}$, while rescaling lengths along the way according to an inverse power law. Then we average by integrating over $s$. Thus, the M\"obius gradient is liable to be large when difference vectors are close to the binormal $\textbf{B}$, particularly when either a few difference vectors near $\gamma(t)$ align with $\textbf{B}$ well, or a lot of far away difference vectors align with $\textbf{B}$ well. Helix curves fall into the latter case, as the difference vector of a long-distance pair points in nearly the same direction as the helical axis. For a general curve with a subset approximated by a helix, uncoiling the curve (which corresponds to simultaneously performing Reidemeister-I moves) will decelerate its M\"obius energy. By studying the asymptotics of the helix, we seek insight into how the M\"obius energy varies as a general curve with a sub-arc approximated by a helix coils or uncoils. 
\subsection{Overview of the main result and its proof}
~\\
A helix has infinite M\"obius energy, but it turns out that the energy density is finite. By invariance under rigid screw isometries, we have that the density is uniform across the whole helix. The pitch $\rho$ of a helix, which is the constant ratio of torsion to curvature, is its essential geometric parameter. Hence, we are led to study the relationship between pitch and energy density.

Specifically, a helix of pitch $2\pi\rho > 0$ (we will henceforth instead use the word ``pitch" to refer to $\rho$) is parametrized by $H_\rho(t) \coloneq (e^{it},\rho t)$ in $\R^3$. As $\rho \to \infty$, the helix unravels to a straight line, uniformly on compact subsets. As $\rho \to 0$, the helix coils infinitely tight, converging to a pathological limiting object which is not a curve. Notably, the torsion $\tau = \frac{\rho}{\rho^2 + 1} \to 0$ in both cases. Taking into account the Euclidean invariance of the M\"obius energy and rescaling by the arclength element $\sqrt{\rho^2 + 1}$, we are led to the real-valued function $I: \R^+ \to \R^+$ defined by
\begin{align*}
    I(\rho) &\coloneq \sqrt{\rho^2 + 1}E(H_\rho,0) \\
    &= \int_{-\infty}^{\infty} \left( \frac{\rho^2 + 1}{\rho^2 t^2 + 4\sin^2 \frac{t}{2}} - \frac{1}{t^2} \right) dt \\
    &\eqqcolon \int_{-\infty}^\infty M_\rho(t)dt.
\end{align*}
The rescaling isolates the regularizing $\frac{1}{t^2}$ term. Upon closer inspection, one can see that $M_\rho$ is indeed a continuous one-parameter family of real-analytic functions. In fact, as functions of a complex variable, the family is analytic at a neighborhood of the origin, which can be chosen independently of $\rho$. So while variations of the M\"obius energy require the Cauchy principal value, we need not concern ourselves with truncations. Furthermore, as the M\"obius energy is defined by a chord-arc comparison, we knew that the integrand would be nonnegative before setting it up. Deducing this from the formula of $M_\rho$ requires verification.

Of course, $\{M_\rho\}$ is a family of integrable functions. An elementary argument, relying on the calculation of a well-known definite integral, leads to our initial result.
\begin{reptheorem}{helix_pointwise_energy}
As $\rho \to \infty$,
\begin{align*}
    I(\rho) &\sim \frac{1}{\rho^2}.
\end{align*}
\end{reptheorem}
\noindent This theorem is proven by the sandwich inequality
\begin{center}
$\frac{C}{\rho^2 + 1} \leq I(\rho) \leq \frac{C}{\rho^2}$.    
\end{center}
However, this inequality tells us nothing about the $\rho \to 0$ asymptotics. The central result of this paper is
\begin{reptheorem}{irho_asymptotics}
    As $\rho \to 0$,
    \begin{align*}
        I(\rho) &\sim \frac{\log \frac{1}{\rho}}{\rho}.
    \end{align*}
\end{reptheorem}
A natural idea is to extend $M_\rho$ to the meromorphic function $M_\rho(z)$ and calculate $I(\rho)$ via a complex contour and the Cauchy integral formula. Indeed, this was precisely the approach of Kim-Kusner in their work on the M\"obius energy of torus knots $T_{p,q}$ \cite{kusner93} with varying radii $r$. The Mobius energy can be represented as a finite sum of residues in the unit disc, in a calculation credited to Kusner and Stengle. They prove existence of a critical $r$-value, and their numerical approximations suggest this critical value is unique. However, recent work by Blatt-Gilsbach-Reiter-von der Mosel claims proof of the existence of multiple critical points of every torus knot type \cite{blatt25}. 

The first difference we encounter is that there is no critical $\rho$-value: $I$ is strictly decreasing, as shown in Lemma \ref{Idecreasing}. This is why we turn our attention to the asymptotics, as opposed to the energy for any fixed and finite $\rho$. Next, to apply the Cauchy integral formula, we needed to apply a less-than-obvious family of contours, as depicted in Fig. \ref{Gamma_R}, which engulfs the upper-half of the complex plane. This justifies writing
\begin{align*}
    I(\rho) &= 2\pi i \sum\limits_{\Imz z > 0} \text{Res}(M_\rho,z).
\end{align*}
Unlike many contour integrals from a first course in complex analysis, there are infinitely many poles, and so an estimate with an arbitrarily large contour will not suffice. This calculation, while necessary, is narrow because the contour we use is not the only valid option. Therefore we have relegated this work to an appendix.

The problem of locating the poles brings us to our first technical challenge. We remark that the regularization removes the origin from the list of poles, ensuring $I(\rho)$ converges, but this does not simplify the problem of locating the poles. Nonzero poles of $M_\rho$ are in bijective correspondence with nonzero roots of the exponential polynomial
\begin{align*}
    E_\rho(z) = \rho^2z^2 + 4\sin^2 \frac{z}{2}.
\end{align*}
Specifically, we prove the following.
\begin{repprop}{erho_existence_uniqueness}
    For all $\rho > 0$ and $k \in \N$, $E_\rho$ has a unique root in the vertical half-strip $[(2k-1)\pi,2k\pi] \times \R^+$. This enumerates all the roots in the upper-right quadrant of $\C$.
\end{repprop}

By factoring, rescaling, and using symmetry, it suffices to describe the complex solutions of the equation $\sin z = i\rho z$ in the upper-half plane. One can see from a graph (see Fig. \ref{sinc_irho_plots}) that the roots are distributed with roughly linearly growing real parts and logarithmically growing imaginary parts. The method for proving an existence and uniqueness theorem is straightforward: apply an intermediate value theorem to the real and imaginary parts. The obstacle, however, is actually demonstrating the boundary conditions and differential inequalities are satisfied.

It turns out most of the $\rho$-independent numbers we need to work with are uncommon transcendentals. Specifically, we define the numbers $c_k$, which are the solutions of $\theta \tan \theta + 1 = 0$ in the interval $[(k-1)\pi,k\pi]$. We also make use of the critical point and maximum value of the function $u \mapsto u\sech u$, whose graph looks like the derivative of a Gaussian. This geometric and analytic approach traces back to the work of Polya and his student Schwengeler on the zeros of exponential polynomials \cite{polya1920,schwengeler1925}. 

The study of exponential polynomials is vast and highly developed, with many canonical theorems recorded in \cite{langer31} and more recently, \cite{hitw23}. However, many of the theorems regarding zeros of exponential polynomials describe only the asymptotic distribution, whereas we seek a statement describing all zeros. Furthermore, as $E_\rho$ is the sum of a polynomial and an exponential, results on pure exponential sums do not apply. The author did not find a mechanism within the exponential polynomial literature which directly yields a statement obviously equivalent to Prop. \ref{erho_existence_uniqueness}. As a result, we needed to formulate our own direct proof.

We remark that in the proof, we have a split regime whose boundary is roughly dependent on the product $k\pi\rho$, altering the chain of inequalities we use to show the boundary conditions and differential inequality in our intermediate value theorem (Lemma \ref{crossing}). We see situations such as one derivative being positive and the other derivative being negative over most of the interval $[(k-1)\pi,k\pi]$, except for a tiny sliver at the end. If one looks at the graphs, one derivative is obviously much larger than the other, indicating the desired differential inequality is clearly satisfied. However, some work was needed to rigorously prove this. The structural symmetries of trigonometric and hyperbolic trigonometric functions which permit the proof to go forward. We also remark that depending on the regime, the boundary condition and the differential inequality trade off in the effort needed to prove them, in such a way that the total required effort remains roughly constant.

By enumerating the roots $z_k = z_k(\rho)$ and taking into account the relevant symmetries, the calculus of residues yields
\begin{align*}
    I(\rho) &= -4\pi (\rho^2 + 1) \sum\limits_{k=1}^\infty \Imz \frac{1}{E'_\rho(z_k(\rho))}.
\end{align*}
As we expect, the roots of $E_\rho$ do not have a closed formula. This leads us to consider the approximate sequence and series
\begin{align*}
    w_k(\rho) &\coloneq 2\pi k + 2 i \arcsinh k\pi\rho \\
    \tilde{I}(\rho) &\coloneq -4\pi (\rho^2  +1) \sum\limits_{k=1}^\infty \Imz \frac{1}{E'_\rho(w_k(\rho))}.
\end{align*}
It was clear from the outset that $2\pi k$ was a strong approximation to the real part of $z_k$. So much so that we could use a $\rho$-independent formula and focus our attention on approximating the imaginary part. A direct calculation shows
\begin{align*}
    E_\rho(w_k(\rho)) &= 4\pi k \rho^2\left[ -\frac{\arcsinh^2 k\pi\rho}{k\pi} + 2i\arcsinh k\pi\rho\right],
\end{align*}
and for small $\rho$, we can see that we have approximated a solution of $E_\rho(z) = 0$ very well. From the discrete distribution of the poles given by Prop. \ref{erho_existence_uniqueness}, and the rigidity of holomorphic functions, we are assured that we are close to the actual $z_k(\rho)$.

As we now have a series written in closed form, we can calculate the asymptotics directly with a classical Euler-Maclaurin summation, which we do in Lemma \ref{Itilde-asymp}. Indeed, $\tilde{I}(\rho) \sim \frac{\log \frac{1}{\rho}}{\rho}$ as $\rho \to 0$. However, we must still rigorously prove the asymptotic equivalence of $I$ and $\tilde{I}$, which is the second technical challenge we encounter. The bridge between the true poles $z_k$ and the approximates $w_k$ is given by the following error estimate.
\begin{repprop}{zkrk_bound_thm}
    For all $\rho > 0$ sufficiently small and for all $k \in \N$, we have
    \begin{align*}
        |z_k(\rho) - w_k(\rho)| &\leq C \frac{\rho \arcsinh k\pi\rho}{\sqrt{(k\pi\rho)^2 + 1}}.
    \end{align*}
\end{repprop}
The proof involves the judicious choice of radii $r_k = \frac{2|E_\rho(w_k)|}{|E'_\rho(w_k)|}$, and applying the Rouch\'e theorem for $E_\rho$ in a disc around $w_k$. The required technique involves absorbing and controlling inequalities, which are ubiquitous in geometric analysis. Even though it is clear that the root we detect near $w_k$ is $z_k$, Prop. \ref{erho_existence_uniqueness} is logically essential to ruling all other possibilities. In this sense, the fusion of the harmonic analysis of the Polya-Schwengeler description of roots and the geometric-analytic technique used for the Rouch\'e hypotheses is mathematically essential to the execution of this helix calculation.

Now, we have that $I$ and $\tilde{I}$ are real-valued, but the strategy to work with complex variables has proven to be plausible so far, and so we expect to ultimately show an asymptotic equivalence of complex functions. However, even for nice complex functions $f(\rho) \sim g(\rho)$ does not necessarily imply $\Imz f(\rho) \sim \Imz g(\rho)$. In a preliminary section, we set up and prove Theorem \ref{transferthm}, a transfer theorem with sign and non-tangency hypotheses. The transfer theorem, due to its simplicity, may be of independent utility for problems requiring a bridge between the real and complex.

Its proof proceeds as expected. Furthermore, it is easy to show the $w_k$-series, due to its closed form, satisfies the sign and non-tangency conditions. However, due to symmetry, we should expect the $z_k$-series to also satisfy the hypotheses, which we show with Lemma \ref{jtransferthm}. While this is not necessary to conclude asymptotic equivalence, the proof requires us to derive a crucial perturbation of the form
\begin{align*}
    \frac{1}{E_\rho'(z_k)} &= \frac{1}{E'_\rho(w_k)}(1+\eta_k),
\end{align*}
where $\eta_k = \eta_k(\rho)$ is a sequence of complex numbers with bounds $|\eta_k| \leq C\rho$, uniformly in $k$. This controlling estimate, along with the sign and non-tangency conditions for the $w_k$-series, are precisely the ingredients which allow us to complete the proof of Theorem \ref{irho_asymptotics} via the following culminating asymptotic equivalence of infinite series:
\begin{repprop}{Jasymptotics}
    As $\rho \to 0,$
    \begin{align*}
        \sum\limits_{k=1}^\infty \frac{1}{E'_\rho(z_k(\rho))} &\sim \sum\limits_{k=1}^\infty \frac{1}{E'_\rho(w_k(\rho))}.
    \end{align*}
\end{repprop}

\subsection{Towards an asymptotics statement for more general curve flows}
~\\
While we have only worked directly with helices, our broader aim is to use them as a model for a general curve flow coiling infinitely tight near a point. That is, where the torsion-to-curvature ratio $\frac{\tau}{\kappa}$, which we call the local pitch $\rho$, tends to zero, as suggested by our heuristic for the first variation in Eq. \eqref{mobius-gradient}.

A potentially useful expression of the arclength-rescaled M\"obius energy density of $\gamma$ at $s_0 \in I$ is of the form
\begin{align*}
    E(\gamma,s_0)|\dot{\gamma}(s_0)| &= I(\rho(s_0)) + R(\gamma,s_0),
\end{align*}
where under suitable hypotheses, and a suitable definition for a curve flow to ``coil infinitely tight", the remainder $R(\gamma,s_0) = o(I(\rho))$. The central obstruction is the nonlocality of the M\"obius energy: the blowup of the energy is a consequence of the curve nearly returning to itself. A genuinely nonlocal hypothesis is required, but coiling is one way to quantify a near return whilst maintaining uniform local geometry. We leave the precise formulation and proof of this asymptotics theorem to future work, which would be significant in its own right because it would have to yield our theorem as a corollary. 

Little is known about the structure of M\"obius minimizing sequences of composite knots (knots which decompose as the connect sum of two nontrivial knots). The existence problem of minimizing representatives in a composite knot isotopy class remains open. The upshot would be that a pull-tight singularity in a knot flow which coils infinitely tight would blow up the M\"obius energy. This would impose necessary geometric constraints for a flow which decreases the M\"obius energy.  

Even though general $C^k$ curves preclude the use of our meromorphic methods, due to strong regularity results of the critical points shown by Blatt in \cite{blatt20}., curve flows expressed with analytic formulas are the main method to search for M\"obius critical knots, which is a topic of considerable interest due to open questions posed by Freedman-He-Wang and He \cite{freedman1994mobius,he2000}. In particular, if a curve flow is given in terms of a formula involving trigonometric polynomials, then the extrinsic distance term $|\gamma(s)-\gamma(t)|^2$ must also be a trigonometric polynomial, which means the techniques from this paper and those of Kim-Kusner remain available.

\section{Preliminaries on Asymptotics}
We briefly establish our conventions for asymptotics. The fundamental relationship we work with is asymptotic equivalence. We will typically refrain from other types of asymptotic notation or external asymptotic results.

\begin{definition}
    Let $x_n$ and $y_n$ be sequences of complex numbers. We say $x_n$ is \textbf{asymptotically equivalent to} $y_n$ \textbf{as} $n \to \infty$, denoted $x_n \sim y_n$, if and only if for all $\vep > 0$, there exists $N \in \N$ such that for all $n > N$,  $$|x_n - y_n| < \vep |y_n|.$$ 
\end{definition}
If we further assume that $y_n$ does not vanish infinitely often, then $x_n \sim y_n$ is equivalent to 
\begin{align*}
    \lim\limits_{n \to \infty} \frac{x_n}{y_n} &= 1.
\end{align*}
It is clear that asymptotic equivalence is indeed an equivalence relation on the set of of all sequences, and a sequence which vanishes infinitely often is never asymptotically equivalent to a sequence which does not.

For scalar-valued functions, we can also define asymptotic equivalence. In this paper, we will only be concerned with limits at zero or infinity. Of course, asymptotic equivalence of functions is a stronger property because the condition must hold for uncountably infinite $\rho$-values.

\begin{definition}
    Let $f,g: (0,\infty) \to \C$ be functions. We say $f$ is \textbf{asymptotically equivalent to} $g$ \textbf{as} $\rho \to 0$ (resp. $\rho \to \infty$), denoted $f(\rho) \sim g(\rho)$, if and only if for every $\vep > 0$, there exists $\rho_0 > 0$ such that for all $0 < \rho < \rho_0$, $|f(\rho)-g(\rho)| < \vep |g(\rho)|$ (resp. for every $\vep > 0$, there exists $R > 0$ such that for all $\rho > R$, $|f(\rho) - g(\rho)| < \vep |g(\rho)|$).
\end{definition}
If $g(\rho)$ is nonzero, proving $f(\rho) \sim g(\rho)$ is equivalent to proving $\frac{|f(\rho) - g(\rho)|}{|g(\rho)|} \to 0$ as $\rho$ approaches its respective limiting value.

We will need to bridge real and complex asymptotics, which leads us to set up and prove the following transfer theorem.

\begin{theorem}[Transfer Theorem] \label{transferthm}
    Let $F,G: (0,\infty) \to \C$ be complex functions which admit absolutely convergent series representations. That is, we have one parameter families of sequences of complex numbers $a_k(\rho)$ and $b_k(\rho)$ such that for all $\rho > 0$,
     \begin{align*}
    F(\rho) &\coloneq \sum_{k=1}^{\infty} a_k(\rho), \quad
    G(\rho) \coloneq \sum_{k=1}^{\infty} b_k(\rho),
\end{align*}
where all the series are absolutely convergent. Suppose there exists $c>0$ and $\rho_0 > 0$ such that for all $0 < \rho < \rho_0$ and $k \in \N$, the following conditions hold:
\begin{enumerate}
    \item (Sign condition) The imaginary parts of all $b_k(\rho)$ share the same sign and are nonzero.
    \item (Non-tangency) $\left| \Imz b_k(\rho) \right| \geq c|b_k(\rho)|.$
\end{enumerate}
Then if $F(\rho) \sim G(\rho)$ as $\rho \to 0$, we have that $\Imz F(\rho) \sim \Imz G(\rho)$ as $\rho \to 0$.
\end{theorem}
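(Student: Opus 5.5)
The idea is to show that, under the two hypotheses, $|\Imz G(\rho)|$ is comparable to $|G(\rho)|$. Once we have that, the error $|F-G|$, which is small relative to $|G|$, is also small relative to $|\Imz G|$.

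Fix $0<\rho<\rho_0$. By the sign condition, every $\Imz b_k(\rho)$ has a common sign $\sigma\in\{\pm1\}$. Absolute convergence lets us take imaginary parts termwise, so
\begin{align*}
|\Imz G(\rho)| = \Bigl|\sum_{k=1}^\infty \Imz b_k(\rho)\Bigr| = \sum_{k=1}^\infty |\Imz b_k(\rho)|.
\end{align*}
There is no cancellation here, which is exactly what the sign condition buys us. Non-tangency then gives
\begin{align*}
\sum_{k=1}^\infty |\Imz b_k(\rho)| \geq c\sum_{k=1}^\infty |b_k(\rho)| \geq c\,|G(\rho)|
\end{align*}
by the triangle inequality. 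Hence $|\Imz G(\rho)|\geq c|G(\rho)|$ for all $0<\rho<\rho_0$. Moreover $\Imz G(\rho)\neq 0$, since each $\Imz b_k(\rho)$ is nonzero and all of them have the same sign.

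Now let $\vep>0$. Since $F\sim G$, there is $\rho_1\le\rho_0$ such that $|F(\rho)-G(\rho)|<c\vep|G(\rho)|$ for all $0<\rho<\rho_1$. For such $\rho$,
\begin{align*}
|\Imz F(\rho)-\Imz G(\rho)| \le |F(\rho)-G(\rho)| < c\vep |G(\rho)| \le \vep |\Imz G(\rho)|,
\end{align*}
which is precisely the definition of $\Imz F(\rho)\sim\Imz G(\rho)$ as $\rho\to0$.

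No step is a real obstacle. The only point needing care is the identity $|\Imz G| = \sum_k|\Imz b_k|$. It requires both termwise passage of $\Imz$ through the series, which holds because the series converges, and the common sign, which rules out cancellation. This identity is where the hypotheses enter; without it the comparison $|\Imz G|\gtrsim|G|$ fails.
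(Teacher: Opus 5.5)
Your proof is correct and follows the paper's argument essentially step for step. The sign condition turns $|\Imz G(\rho)|$ into $\sum_k |\Imz b_k(\rho)|$, and non-tangency together with the triangle inequality gives $|\Imz G(\rho)| \geq c|G(\rho)|$; choosing $\rho$ small enough that $|F(\rho)-G(\rho)| < c\vep|G(\rho)|$ then yields $|\Imz F(\rho) - \Imz G(\rho)| \leq |F(\rho)-G(\rho)| < \vep|\Imz G(\rho)|$.
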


\begin{proof}
    Let $\vep > 0$ and let $\rho_0$ and $c$ be as above so both conditions are satisfied. Furthermore, assume $\rho_0$ is sufficiently small such that for all $0 < \rho < \rho_0$, $|F(\rho) - G(\rho)| < c\vep |G(\rho)|.$ Write $F_0(\rho) \coloneq \sum\limits_{k=1}^\infty \Imz a_k(\rho)$ and $G_0 \coloneq \sum\limits_{k=1}^\infty \Imz b_k(\rho)$, both of which are absolutely convergent real series.

    By the sign condition and non-tangency,
    \begin{align*}
        |G_0(\rho)| &= \sum\limits_{k=1}^\infty \left|\Imz b_k(\rho) \right| \\
        &\geq c\sum\limits_{k=1}^\infty |b_k(\rho)| \\
        &\geq c|G(\rho)|.
    \end{align*}
    Therefore, applying the trivial bound $\left|\Imz z\right| \leq |z|$, we have
    \begin{align*}
        |F_0(\rho) - G_0(\rho)| &\leq |F(\rho) - G(\rho)| \\
        &< c\vep |G(\rho)| \\
        &\leq \vep |G_0(\rho)|.
    \end{align*}
\end{proof}

\section{The M\"obius energy integral of a helix}
\subsection{Setup and initial observations}
~\\
For $\rho > 0$, a helix of pitch $2\pi\rho$ is parametrized by 
\begin{align*}
    H_\rho(t) &\coloneq (e^{it},\rho t) \\
    &= (\cos t, \sin t, \rho t).
\end{align*}
The M\"obius energy is invariant under Euclidean similarities, and $\rho$ exhausts all similarity classes of helices. Let us compute
\begin{align*}
    \left| H_\rho(t) - H_\rho(s) \right|^2 &= \left|e^{it} - e^{is}\right|^2 + \rho^2(t-s)^2 \\
    &= \left| e^{i(t-s)} - 1 \right|^2 + \rho^2(t-s)^2\\
    &= 4\sin^2\left( \frac{t-s}{2} \right) + \rho^2(t-s)^2.
\end{align*}
The arclength element is the constant $|\dot{H}_\rho| = \sqrt{1 + \rho^2}$. Therefore,
\begin{align*}
    D(H_\rho(t),H_\rho(s))^2 = (1 + \rho^2)(t-s)^2.
\end{align*}
Assembling all of this into \eqref{ejp}, we have
\begin{align*}
    E(H_\rho,s) &= \int_{-\infty}^\infty \left(\frac{\sqrt{1 + \rho^2}}{4\sin^2\left( \frac{t-s}{2} \right) + \rho^2(t-s)^2} - \frac{1}{\sqrt{1+\rho^2}(t-s)^2} \right)dt. 
\end{align*}
 By the invariance of $H_\rho$ by rigid screw motions, the energy density is constant in $s$. This implies $E(H_\rho) = \infty$, akin to how a line has infinite length. We are therefore confined to analyzing the energy density $E(H_\rho, 0)$.
 
 Rescaling the density allows us to isolate the regularization term in $t$. So, let us define the central quantity we study in this paper:
 \begin{align}
 \label{mrho}
     I(\rho) &\coloneq \sqrt{1 + \rho^2}E(H_\rho,0) \nonumber \\
     &= \int_{-\infty}^\infty \left( \frac{1 + \rho^2}{\rho^2 t^2 + 4\sin^2 \frac{t}{2}} - \frac{1}{t^2}\right)dt \nonumber \\
     &\eqqcolon \int_{-\infty}^{\infty} M_\rho(t) dt.
 \end{align}

\begin{remark}
    Truncated helices have finite energy. Let $C_N = \{ (x,y,z) \in \R^3: x^2 + y^2 \leq 1, |z| \leq N \}$ denote the closed cylinder with fixed height $2N > 0$. The helix segments $H_\rho \cap C_N$ will converge uniformly as $\rho \to \infty$ to a vertical straight line segment passing through $(0,0,1)$, which has zero energy. We can apply Theorem \ref{helix_pointwise_energy} to see
    \begin{align*}
        E(H_\rho \cap C_N) &= (1 + \rho^2)\int_{-\frac{N}{\rho}}^{\frac{N}{\rho}} \int_{-\frac{N}{\rho}}^{\frac{N}{\rho}} M_{\rho}(t-s) dsdt \\
        &\leq \sqrt{1 + \rho^2} \int_{\frac{-N}{\rho}}^{\frac{N}{\rho}} I(\rho) dt \\
        &\leq C \sqrt{1 + \rho^2}\int_{-\frac{N}{\rho}}^{\frac{N}{\rho}} \frac{dt}{\rho^2}\\ &= 2NC \frac{\sqrt{1 + \rho^2}}{\rho^3} \to 0, \text{ as $\rho \to \infty$.}
    \end{align*}
    Of course, this is exactly what lower-semicontinuity of the M\"obius energy predicts. By the absolute continuity of the M\"obius energy (see Eq. 1.6 in \cite{freedman1994mobius}), the truncation need not be symmetric, as was the case in this remark.
\end{remark}

\begin{figure}[h]
    \centering
    \includegraphics[width=4in]{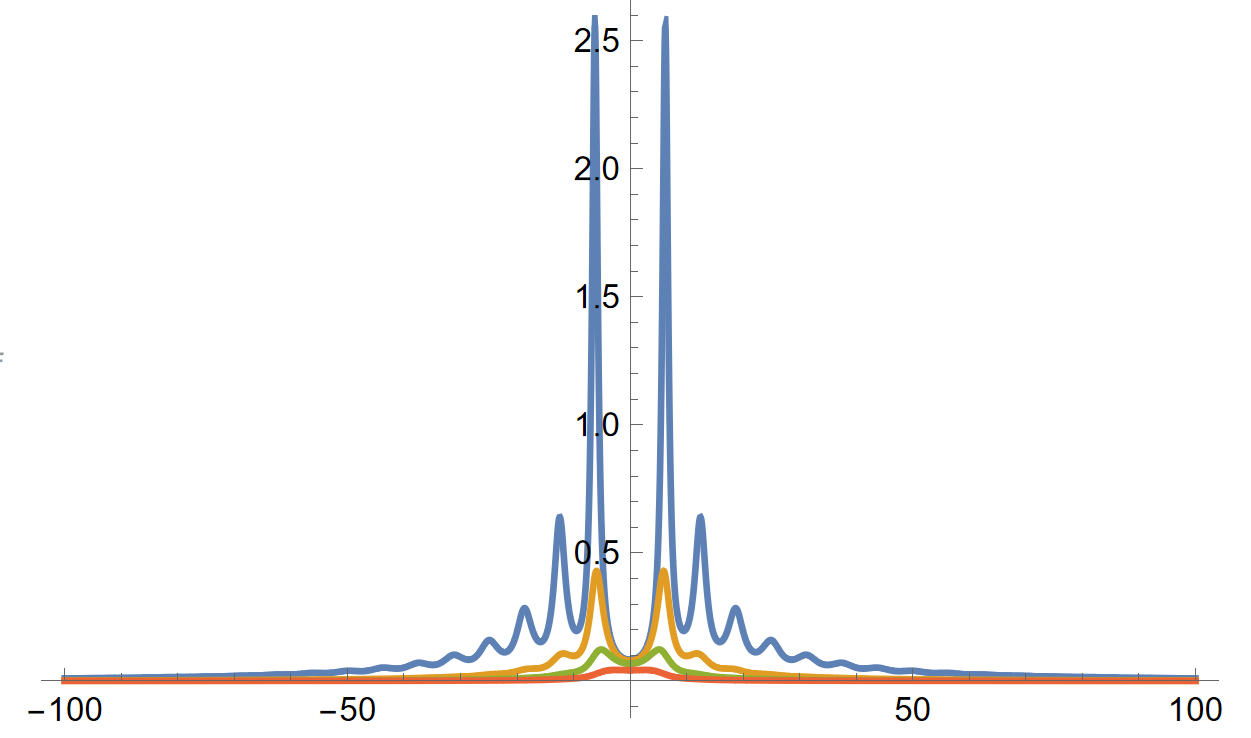}
    \caption{Plots of $M_\rho(t)$, for various $0 < \rho \leq 1$.}
\end{figure}

\begin{theorem}
\label{helix_pointwise_energy}
As $\rho \to \infty$,
\begin{align}
    I(\rho) \sim \frac{1}{\rho^2}.
\end{align}

\end{theorem}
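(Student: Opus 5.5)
The plan is to use the sandwich inequality announced in the introduction. First I will put $M_\rho$ over a common denominator:
\begin{align*}
    M_\rho(t) &= \frac{(1+\rho^2)t^2 - \rho^2 t^2 - 4\sin^2\frac{t}{2}}{t^2\left(\rho^2t^2 + 4\sin^2\frac{t}{2}\right)} = \frac{t^2 - 4\sin^2\frac{t}{2}}{t^2\left(\rho^2 t^2 + 4\sin^2 \frac{t}{2}\right)}.
\end{align*}
The elementary bound $|\sin x| \leq |x|$ gives $0 \leq 4\sin^2\frac{t}{2} \leq t^2$. This shows that the numerator is nonnegative, which verifies the chord-arc nonnegativity directly. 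It also traps the denominator:
\begin{align*}
    \rho^2 t^4 \leq t^2\left(\rho^2t^2 + 4\sin^2\tfrac{t}{2}\right) \leq (\rho^2+1)t^4 .
\end{align*}
Setting $f(t) \coloneq \frac{t^2 - 4\sin^2 \frac{t}{2}}{t^4}$, it follows that $\frac{f(t)}{\rho^2+1} \leq M_\rho(t) \leq \frac{f(t)}{\rho^2}$ pointwise for $t \neq 0$.

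Next I will check that $f$ is integrable. Near $0$, Taylor expansion gives $t^2 - 4\sin^2\frac{t}{2} = t^2 - 2 + 2\cos t = \frac{t^4}{12} + O(t^6)$, so $f$ extends continuously with $f(0) = \frac{1}{12}$. At infinity, $f(t) = O(t^{-2})$. Hence $C \coloneq \int_{-\infty}^\infty f(t)\,dt$ is finite and positive, and integrating the pointwise bounds yields
\begin{align*}
    \frac{C}{\rho^2+1} \leq I(\rho) \leq \frac{C}{\rho^2}.
\end{align*}
Multiplying by $\rho^2$ and letting $\rho \to \infty$ gives $\rho^2 I(\rho) \to C$ by squeezing.

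It remains to evaluate $C$, which is the "well-known definite integral". I will use $g(a) \coloneq \int_{-\infty}^\infty \frac{\cos at - 1 + \frac{a^2t^2}{2}}{t^4}\,dt$ for $a \geq 0$. Differentiating twice under the integral, which is justified by dominated convergence on compact $a$-sets, gives $g''(a) = \int_{-\infty}^\infty \frac{1-\cos at}{t^2}\,dt = \pi a$. This uses the classical $\int \frac{1-\cos t}{t^2}\,dt = \pi$ and a rescaling. Together with $g(0) = g'(0) = 0$, this yields $g(a) = \frac{\pi a^3}{6}$. Since $f(t) = 2\,\frac{\cos t - 1 + t^2/2}{t^4}$, we get $C = 2g(1) = \frac{\pi}{3}$.

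The main obstacle is this constant. The sandwich argument is robust and gives $I(\rho) \sim \frac{C}{\rho^2}$ with $C = \frac{\pi}{3}$. So the statement $I(\rho) \sim \frac{1}{\rho^2}$ should be read with the normalization constant $\frac{\pi}{3}$ absorbed, i.e.\ $I(\rho) \sim \frac{\pi}{3\rho^2}$. As literally written with constant $1$ it is false, since $\rho^2 I(\rho) \to \frac{\pi}{3} \neq 1$. I would double-check the differentiation step under the integral sign, where the integrand is only conditionally well-behaved at infinity after one differentiation. If that step is delicate, I would instead compute $C$ by integrating by parts twice to reduce to $\int \frac{1-\cos t}{t^2}\,dt$.
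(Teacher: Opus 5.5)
Your proof is correct and follows the paper's route: the same pointwise sandwich $\frac{f(t)}{\rho^2+1}\le M_\rho(t)\le\frac{f(t)}{\rho^2}$ (the paper writes it after substituting $t\mapsto 2t$, in terms of $\sinc$), followed by the classical integral, which the paper simply cites while you derive it. Your differentiation under the integral sign is fully justified, since $\frac{at-\sin at}{t^3}$ and $\frac{1-\cos at}{t^2}$ are dominated by $C_A\min(1,t^{-2})$ for $0\le a\le A$; and your remark about the constant is right: the paper's own bounds are $\frac{\pi}{3(\rho^2+1)}\le I(\rho)\le\frac{\pi}{3\rho^2}$, so both arguments actually establish $\rho^2 I(\rho)\to\frac{\pi}{3}$, i.e.\ $I(\rho)\sim\frac{\pi}{3\rho^2}$, and the stated $I(\rho)\sim\frac{1}{\rho^2}$ holds only up to this constant (as the paper tacitly concedes by calling the theorem ``equivalent'' to the two-sided bounds $\frac{C}{\rho^2+1}\le I(\rho)\le\frac{C}{\rho^2}$).
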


\begin{proof}
    Let $\rho > 0$. Recall the definition $\sinc(t) \coloneq \frac{\sin t}{t}$, which is analytic and bounded above by $1$ on $\R$, and observe
    \begin{align}
        I(\rho) &=  \frac{1}{2} \int_{-\infty}^{\infty} \frac{t^2 - \sin^2 t}{t^2\left(\rho^2t^2 + \sin^2 t\right)}dt \nonumber\\
        &= \frac{1}{2} \int_{-\infty}^{\infty} \frac{1 - \sinc^2 t}{t^2\left(\rho^2 + \sinc^2 t\right)}dt. \label{sinc_integral}
    \end{align}
Hence
\begin{align*}
    I(\rho) &\leq \frac{1}{2\rho^2} \int_{-\infty}^\infty \frac{1 - \sinc^2 t}{t^2}dt \\
    &= \frac{\pi}{3\rho^2}.
\end{align*}
The prior definite integral is well known, appearing in \cite{medhurst65} and \cite{gr07}. Next, as  $\rho^2 + \sinc^2 t \leq \rho^2 + 1$, we have
\begin{align*}
    I(\rho) &\geq \frac{1}{2 (\rho^2 + 1)} \int_{-\infty}^{\infty} \frac{1 - \sinc^2 t}{t^2}dt \\
    &= \frac{\pi}{3(\rho^2 + 1)}.
\end{align*}
Again, we have used the same definite integral from \cite{gr07}.
\end{proof}

As we expect, $I(\rho)$ is monotonic. In fact, $I$ is differentiable and strictly decreasing.

\begin{lemma}\label{Idecreasing}
    For $\rho > 0$, we have
    \begin{align*}
        \frac{dI}{d\rho} &< 0.
    \end{align*}
\end{lemma}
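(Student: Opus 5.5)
We plan to work from the sinc representation \eqref{sinc_integral} obtained in the proof of Theorem \ref{helix_pointwise_energy},
\begin{align*}
    I(\rho) &= \frac{1}{2}\int_{-\infty}^{\infty} \frac{1 - \sinc^2 t}{t^2\left(\rho^2 + \sinc^2 t\right)}\,dt,
\end{align*}
in which $\rho$ enters only through the denominator $\rho^2 + \sinc^2 t$. Formally, differentiating under the integral sign gives
\begin{align*}
    \frac{dI}{d\rho} &= -\rho\int_{-\infty}^{\infty} \frac{1 - \sinc^2 t}{t^2\left(\rho^2 + \sinc^2 t\right)^2}\,dt .
\end{align*}
The integrand here is nonnegative, because $|\sinc t| \leq 1$. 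It is also not identically zero, since $1 - \sinc^2 t > 0$ for every $t \neq 0$. Consequently, once the differentiation is justified, $\frac{dI}{d\rho} < 0$ for all $\rho > 0$.

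The one step requiring care is this justification. We would fix a compact interval $[\rho_1,\rho_2] \subset (0,\infty)$ and dominate the $\rho$-derivative of the integrand uniformly on it. For $\rho \in [\rho_1,\rho_2]$ we have $(\rho^2 + \sinc^2 t)^2 \geq \rho_1^4$, so
\begin{align*}
    \left|\frac{\partial}{\partial\rho}\,\frac{1 - \sinc^2 t}{t^2(\rho^2 + \sinc^2 t)}\right| &\leq \frac{2\rho_2}{\rho_1^4}\cdot\frac{1 - \sinc^2 t}{t^2}.
\end{align*}
The right-hand side is integrable on $\R$: its integral is exactly the definite integral already used in the proof of Theorem \ref{helix_pointwise_energy}. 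Concretely, $\frac{1-\sinc^2 t}{t^2}$ extends continuously to $t = 0$ with value $\frac{1}{3}$ and is bounded by $t^{-2}$ at infinity. The Leibniz rule, in its dominated convergence form, then gives both the differentiability of $I$ on $(\rho_1,\rho_2)$ and the displayed formula for $\frac{dI}{d\rho}$. Since the interval was arbitrary, this holds on all of $(0,\infty)$.

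We expect essentially no obstacle. The only subtle point is that the lower bound on $\rho^2 + \sinc^2 t$ must come from $\rho$ itself rather than from $\sinc$, because $\sinc$ has zeros at $t \in \pi\Z\setminus\{0\}$. This is why we restrict to $\rho \geq \rho_1 > 0$. For the same reason, $\frac{dI}{d\rho}$ may blow up as $\rho \to 0$, which is consistent with Theorem \ref{irho_asymptotics}.
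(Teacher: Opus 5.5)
Your proof is correct and follows the paper's own route: differentiate the $\sinc$ representation under the integral sign, then use $|\sinc t|\le 1$ to fix the sign of the integrand. You also fill in two details the paper leaves implicit. One is the uniform domination on compact $\rho$-intervals, which justifies the Leibniz rule. The other is the strict positivity of $1-\sinc^2 t$ for $t\neq 0$, which upgrades the paper's ``nonpositive integrand'' to the strict inequality $\frac{dI}{d\rho}<0$.
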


\begin{proof}
    Differentiating under the integral sign yields
    \begin{align*}
        \frac{dI}{d\rho} &= \bigintsss_{-\infty}^{\infty} \frac{\rho(\sinc^2 t - 1)}{t^2\left(\rho^2 + \sinc^2 t\right)^2}dt.
    \end{align*}

\noindent This integrand is nonpositive on $\R$, and it is easy to verify the integral converges.
\end{proof}

Theorem \ref{helix_pointwise_energy} is equivalent to the bounds
\begin{align}
\label{irho_estimate}
    \frac{C}{\rho^2+1} \leq I(\rho) \leq \frac{C}{\rho^2},
\end{align}
\noindent The remainder of this section is concerned with proving the following theorem.

\begin{theorem}
\label{irho_asymptotics}
As $\rho \to 0$,
\begin{align*}
    I(\rho) \sim \frac{\log \frac{1}{\rho}}{\rho}.
\end{align*}
\end{theorem}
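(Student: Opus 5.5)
The plan follows the outline from the introduction: (i) express $I(\rho)$ as a residue sum over the poles of $M_\rho$; (ii) replace the unknown poles $z_k(\rho)$ by the explicit points $w_k(\rho)=2\pi k+2i\arcsinh k\pi\rho$; (iii) compute the asymptotics of the resulting explicit series.

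\textbf{Step 1: residue formula.} Extend $M_\rho$ meromorphically and integrate over a family of contours exhausting the upper half-plane. The contours pass between consecutive poles so that $M_\rho$ decays on them. This gives
\begin{align*}
I(\rho)=2\pi i\sum_{\Imz z>0}\text{Res}(M_\rho,z).
\end{align*}
The $1/t^2$ term has no poles off the origin, and the origin is removable for $M_\rho$. Proposition \ref{erho_existence_uniqueness} enumerates the upper-right-quadrant roots $z_k$ of $E_\rho$, one in each half-strip. The reflection $z\mapsto -\bar z$ pairs them with the upper-left roots. Since these roots are simple, we get
\begin{align*}
I(\rho)=-4\pi(\rho^2+1)\sum_k \Imz \frac{1}{E_\rho'(z_k)}.
\end{align*}

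\textbf{Step 2: the model series.} Define $\tilde I$ by the same formula with $w_k$ in place of $z_k$, and compute it explicitly. We have
\begin{align*}
E_\rho'(w)=2\rho^2 w+2\sin w,
\end{align*}
and $\sin(2\pi k+2ia)=i\sinh 2a$ with $a=\arcsinh k\pi\rho$. Hence
\begin{align*}
E'_\rho(w_k)=4\pi k\rho^2+4i\rho^2\arcsinh(k\pi\rho)+4i\,k\pi\rho\sqrt{1+(k\pi\rho)^2}.
\end{align*}
The imaginary part dominates, so $\Imz(1/E'_\rho(w_k))$ has a fixed negative sign and its size is comparable to $|1/E'_\rho(w_k)|$. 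This gives the sign and non-tangency conditions of Theorem \ref{transferthm} for the $b_k$, and also
\begin{align*}
-\Imz\frac{1}{E'_\rho(w_k)}\approx \frac{1}{4k\pi\rho\sqrt{1+(k\pi\rho)^2}}.
\end{align*}
Summing over $k$ by Euler--Maclaurin, or by comparison with $\int_1^\infty \frac{dx}{x\sqrt{1+x^2\pi^2\rho^2}}=\arcsinh\frac{1}{\pi\rho}+O(1)$, yields $\tilde I(\rho)\sim\frac{1}{\rho}\log\frac1\rho$. This is the content of Lemma \ref{Itilde-asymp}.

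\textbf{Step 3: comparison with the true poles.} This is the main obstacle. Proposition \ref{zkrk_bound_thm} gives
\begin{align*}
|z_k-w_k|\le \frac{C\rho\arcsinh(k\pi\rho)}{\sqrt{1+(k\pi\rho)^2}}.
\end{align*}
I would feed this into
\begin{align*}
E_\rho'(z_k)-E_\rho'(w_k)=2\rho^2(z_k-w_k)+2(\sin z_k-\sin w_k).
\end{align*}
Bound the sine difference by $\sup|\cos|\cdot|z_k-w_k|$ on the segment between the two points. There $|\cos|\lesssim\cosh(\Imz w_k+O(\rho))\lesssim\sqrt{1+(k\pi\rho)^2}$, because the imaginary parts of $z_k$ and $w_k$ differ by $O(\rho)$. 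The difference is therefore $O\!\left(\rho\arcsinh(k\pi\rho)\right)$. Meanwhile $|E'_\rho(w_k)|\gtrsim k\rho\sqrt{1+(k\pi\rho)^2}$, and
\begin{align*}
\frac{\arcsinh(k\pi\rho)}{k\sqrt{1+(k\pi\rho)^2}}\lesssim\rho
\end{align*}
uniformly in $k$. It follows that
\begin{align*}
\frac{1}{E'_\rho(z_k)}=\frac{1+\eta_k}{E'_\rho(w_k)},\qquad |\eta_k|\le C\rho \text{ uniformly in } k.
\end{align*}

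\textbf{Step 4: transfer to imaginary parts.} From Step 3,
\begin{align*}
\Bigl|\sum_k \frac{1}{E'_\rho(z_k)}-\sum_k \frac{1}{E'_\rho(w_k)}\Bigr|\le C\rho\sum_k\Bigl|\frac{1}{E'_\rho(w_k)}\Bigr|.
\end{align*}
By non-tangency, the right-hand side is at most $(C\rho/c)\,\bigl|\sum_k\Imz\frac{1}{E'_\rho(w_k)}\bigr|$. This gives Proposition \ref{Jasymptotics}, and Theorem \ref{transferthm} then yields equivalence of the imaginary parts. One could also skip the transfer theorem and obtain the imaginary-part equivalence directly from this display.

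\textbf{Conclusion.} Multiplying by $-4\pi(\rho^2+1)$ and using $\rho^2+1\to1$ gives $I(\rho)\sim\tilde I(\rho)\sim\rho^{-1}\log\frac1\rho$.

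The delicate point is the uniformity in $k$ in Step 3, across the two regimes $k\pi\rho\lesssim1$ and $k\pi\rho\gg1$. This is where the estimate of Proposition \ref{zkrk_bound_thm} must be sharp, with the $\sqrt{1+(k\pi\rho)^2}$ denominator cancelling the growth of $\cosh\Imz$.
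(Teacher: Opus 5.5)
Your proposal follows the paper's proof essentially step for step. It uses the residue series organized by Proposition \ref{erho_existence_uniqueness}, the explicit model series over the $w_k$ with its sign and non-tangency conditions, and the Rouch\'e bound of Proposition \ref{zkrk_bound_thm}. It then derives the perturbation $1/E'_\rho(z_k)=(1+\eta_k)/E'_\rho(w_k)$ with $|\eta_k|\le C\rho$ uniformly in $k$, and sums as in Proposition \ref{Jasymptotics}. Your remark that taking imaginary parts in that last display bypasses the transfer theorem is correct.

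One computation in Step 3 is wrong, though the error is harmless. Since $\Imz w_k=2\arcsinh(k\pi\rho)$, one has $\cosh \Imz w_k=1+2(k\pi\rho)^2$, which is not of size $\sqrt{1+(k\pi\rho)^2}$. The correct estimate is $|E'_\rho(z_k)-E'_\rho(w_k)|\le C\rho\arcsinh(k\pi\rho)\sqrt{1+(k\pi\rho)^2}$, as in \eqref{dEzkwk}. Your claimed $O(\rho\arcsinh(k\pi\rho))$ is actually false when $k\pi\rho\gg 1$: a Newton step from $w_k$ shows that the bound of Proposition \ref{zkrk_bound_thm} is sharp up to constants.

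The conclusion $|\eta_k|\le C\rho$ nonetheless holds. Dividing by $|E'_\rho(w_k)|\ge 4k\pi\rho\sqrt{1+(k\pi\rho)^2}$ leaves $C\arcsinh(k\pi\rho)/(k\pi)\le C\rho$ by Lemma \ref{arcsinhu_overu}. So the $(k\pi\rho)^2$ growth of $|\cos|$ is cancelled half by the denominator in Proposition \ref{zkrk_bound_thm} and half by $|E'_\rho(w_k)|$. It is not cancelled by the former alone, as your closing remark suggests.
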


\subsection{Expansion of $I(\rho)$ into a series of residues} \label{residueseries}
~\\
In order to calculate the asymptotics of $I$, we rewrite $I$ as a discrete infinite series. We have that
\begin{align}
\label{irho_residueseries}
    I(\rho) &= 2\pi i \sum\limits_{\text{Im} z > 0} \text{Res }(M_\rho,z).
\end{align}

This equality must be justified. As there are countably infinite poles, proving \eqref{irho_residueseries} requires more than a general estimate on arbitrarily large contours.

We may write $M_\rho(t)$ with the complex variable $z$ a couple of different ways:
\begin{subequations}
\begin{align}
    M_\rho(z) &= \frac{\rho^2 + 1}{\rho^2z^2 +2 - 2\cos(z)} - \frac{1}{z^2} \label{mrho1} \\
    &=  \frac{\rho^2 + 1}{(\rho z)^2 + (2\sin(\frac{z}{2}))^2} - \frac{1}{z^2} \label{mrho2} \\
    &= \frac{\rho^2 + 1}{(2\sin({\frac{z}{2}}) + i\rho z)(2\sin(\frac{z}{2})-i\rho z)} - \frac{1}{z^2} \label{mrho3} \\
    &=  \frac{1 - \sinc^2(\frac{z}{2})}{z^2(\sinc(\frac{z}{2})+i\rho)(\sinc(\frac{z}{2})- i\rho)}. \label{mrho4}
\end{align}
\end{subequations}

Even though the expressions \eqref{mrho1}-\eqref{mrho4} indicate a possible singularity at $z = 0$, observe
\begin{align*}
     \frac{\rho^2 + 1}{(\rho z)^2 + (2\sin(\frac{z}{2}))^2} &= \frac{\rho^2+1}{\rho^2z^2 + (z^2 + O(z^4))} \\
     &= \frac{1}{z^2} \left(\frac{\rho^2+1}{(\rho^2 + 1) + O(z^2)}\right) \\
     &= \frac{1}{z^2}(1 + O(z^2)).
\end{align*}
So the second-order pole in this expression cancels with the $-\frac{1}{z^2}$ term in \eqref{mrho2}, which means the singularity at the origin is removable.

\begin{figure}
    \centering
    \includegraphics[width=0.5\linewidth]{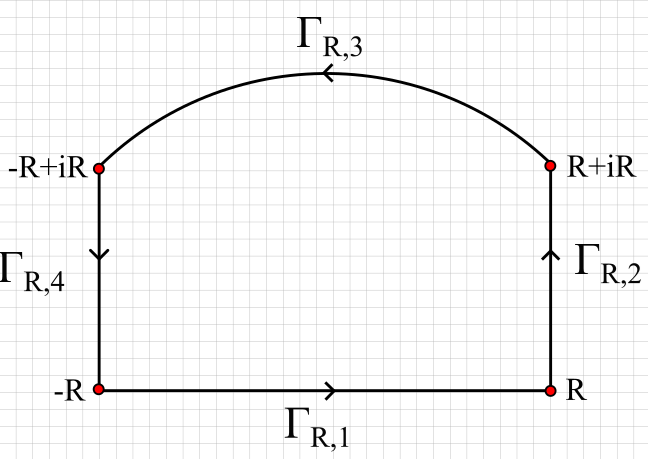}
    \caption{The oriented contour $\Gamma_R$ and its four subcontours. Here, $\Gamma_{R,3}$ is the segment of the circle of radius $R\sqrt{2}$ centered at $0$ joining the points $R+iR$ and $-R+iR$.}
    \label{Gamma_R}
\end{figure}

For $R > 0$, let $\Gamma_R$ be the counterclockwise oriented contour in the complex plane, as depicted in Fig. \ref{Gamma_R}, along with its four segments $\Gamma_{R,i}$, $i = 1,\dots, 4$. With a little bit of work shown in the appendix, we can show $\int_{\Gamma_{R,2}} + \int_{\Gamma_{R,4}}$ and $\int_{\Gamma_{R,3}}$ tend to zero as $R \to \infty$. 
Hence we may now assert
\begin{align*}
\int_{-\infty}^{\infty} M_{\rho}(t)dt &= \lim\limits_{R \to \infty} \int_{\Gamma_R} M_\rho(z)dz.
\end{align*}
And so, Eq. \eqref{irho_residueseries} follows from the Cauchy integral formula.
\subsection{The poles of $M_\rho$} \label{erho}
~\\
To calculate the sum of the residues for $M_\rho$, it helps to know where its poles are. As we expect, they have no closed form expression. Nonzero poles of $M_\rho$ are in bijective correspondence to nonzero roots of the exponential polynomial $E_\rho(z)$, defined as
\begin{align}
    E_\rho(z) \coloneq \rho^2z^2 + 4\sin^2 \frac{z}{2}.
\end{align}
We need to prove the following descriptive proposition of the roots.

\begin{prop} \label{erho_existence_uniqueness}
    For all $\rho > 0$ and $k \in \N$, $E_\rho$ has a unique root in the vertical half-strip $[(2k-1)\pi,2k\pi] \times \R^+$. This enumerates all the roots in the upper-right quadrant of $\C$.
\end{prop}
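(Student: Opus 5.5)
The plan is to reduce $E_\rho(z)=0$ to a pair of real equations and use an intermediate value argument along a curve. Since $E_\rho(z) = (2\sin\frac z2 + i\rho z)(2\sin\frac z2 - i\rho z)$ by \eqref{mrho3}, put $z = 2w$. The roots of $E_\rho$ are then exactly the solutions of $\sin w = i\sigma\rho w$ with $\sigma\in\{\pm1\}$. The half-strip $[(2k-1)\pi,2k\pi]\times\R^+$ in $z$ becomes $[(k-\frac12)\pi,k\pi]\times\R^+$ in $w$.

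Write $w = x+iy$ with $x\ge 0$ and $y>0$. Comparing real and imaginary parts gives
\begin{align*}
\sin x\cosh y = -\sigma\rho y, \qquad \cos x \sinh y = \sigma\rho x.
\end{align*}
These two equations are equivalent to the conjunction of two sign-free equations. The first is their ratio,
\begin{align*}
x\tan x + y\tanh y = 0,
\end{align*}
where the cases $\cos x = 0$ and $x=0$ are handled separately: $\cos x=0$ forces $x=0$ by the second equation, and $x = 0$ forces $\sinh y = -\sigma\rho y$ with $\sin(iy)=i\sinh y$, which is impossible for $y>0$. The second is their modulus,
\begin{align*}
\phi(x,y) \coloneq \sin^2 x + \sinh^2 y - \rho^2(x^2+y^2) = 0.
\end{align*}
Given both, $\sigma$ is then determined by the sign of $\cos x$. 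Since $y\tanh y>0$, the ratio equation forces $\tan x<0$. Hence $x$ lies in some open interval $((k-\frac12)\pi,k\pi)$ with $k\ge1$, and the interval $(0,\pi/2)$ is excluded because $\tan x>0$ there. This already yields the enumeration claim: every root in the open upper-right quadrant lies in one of the half-strips.

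Fix $k$. The map $y\mapsto y\tanh y$ is an increasing bijection $(0,\infty)\to(0,\infty)$, and $-x\tan x$ decreases from $+\infty$ to $0$ on $((k-\frac12)\pi,k\pi)$. Thus the ratio equation defines a smooth curve $y = y(x)$ on that interval. The derivative of $x\tan x$ is $\tan x + x\sec^2 x$, which equals $x t^2 + t + x$ with $t = \tan x$. This quadratic in $t$ has negative discriminant $1-4x^2<0$ for $x>\frac12$, so it is positive, and implicit differentiation gives $y'(x)<0$. Moreover $y\to\infty$ as $x\downarrow(k-\frac12)\pi$ and $y\to0$ as $x\uparrow k\pi$.

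Set $\Phi(x) = \phi(x,y(x))$. Then $\Phi\to+\infty$ at the left endpoint, because $\sinh^2 y$ dominates $\rho^2 y^2$. At the right endpoint $\Phi \to -\rho^2k^2\pi^2<0$. The intermediate value theorem therefore gives a root, and it has $y>0$ since it occurs in the interior of the interval.

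Uniqueness is the main obstacle. The derivative is
\begin{align*}
\Phi'(x) = \sin 2x - 2\rho^2 x + \left(\sinh 2y - 2\rho^2 y\right)y'(x).
\end{align*}
Both $\sin 2x$ and $-2\rho^2 x$ are negative on the interval. When $\sinh(2y)/(2y) \ge \rho^2$, which holds automatically if $\rho\le1$, the last term is also nonpositive, so $\Phi$ is strictly decreasing and the root is unique. For general $\rho$ I would not try to prove global monotonicity. Instead I would prove transversality: $\Phi'(x)<0$ at every zero of $\Phi$, which is what a crossing lemma needs, and this again forces uniqueness. At a zero, $\sinh^2 y = \rho^2(x^2+y^2)-\sin^2x$, and I would substitute this together with $\tanh y = -x\tan x/y$ to express everything through $x$, $\tan x$ and $y$.

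I expect this to split into regimes according to the size of $k\pi\rho$. When $k\pi\rho$ is large, the root has large $y\approx \log(2k\pi\rho)$, and $\sinh 2y$ beats $2\rho^2y$ because $\sinh y\approx\rho x \gg \rho y$. When $k\pi\rho$ is small, the root sits near $x\approx k\pi$ with small $y$, and the term $\sin 2x - 2\rho^2x$ must absorb any positive contribution. Here I would use the bound $|y'|\lesssim (\tan x + x\sec^2 x)/(2y)$ for small $y$ to control it. This last estimate, done uniformly in $k$ and $\rho$, is where I expect the real work to lie.
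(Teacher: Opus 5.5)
Your reduction is correct and takes a genuinely different route from the paper. The paper intersects the two $\rho$-dependent curves $\rho x=\cos x\sinh y$ and $-\rho y=\sin x\cosh y$ using the X-Principle. It splits into the regimes $\rho<1/\alpha$ (with subcases governed by $c_k$ and $h_\rho$) and $\rho\ge 1/\alpha$, and it relies on the constants $c_k,\alpha,\beta$ together with several numerical checks. You instead parametrize by the $\rho$-independent decreasing curve $x\tan x+y\tanh y=0$ and put all the $\rho$-dependence into the single function $\Phi$. This gives the enumeration claim and existence at once. For $\rho\le 1$, your observation $\sinh 2y>2y\ge 2\rho^2 y$ makes $\Phi$ strictly decreasing, so that case is complete and much shorter than the paper's.

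The gap is uniqueness for $\rho>1$. The statement requires it, and you only defer it to a transversality estimate you do not carry out. The regime split you anticipate is also misdirected:
\begin{itemize}
\item For $\rho>1$ and $k\ge 1$ one has $k\pi\rho>\pi$, so the small-$k\pi\rho$ regime never occurs.
\item At a zero, $\sinh^2 y\ge \rho^2x^2-1>\pi^2/4-1$, so $y$ is never small.
\item Your heuristic $\rho x\gg\rho y$ fails: for $k=1$ and $\rho\to\infty$ the root has $y\approx\log(2\rho|w|)\gg x$.
\end{itemize}
The estimate you need follows in two lines from the modulus equation. At a zero of $\Phi$, with $x>\pi/2$ and $\rho>1$,
\[
\cosh^2 y=\rho^2(x^2+y^2)+\cos^2 x>\rho^2x^2>\rho^2,\qquad \sinh^2 y=\rho^2y^2+\bigl(\rho^2x^2-\sin^2 x\bigr)>\rho^2y^2,
\]
where the second inequality uses $\rho^2x^2>\pi^2/4>1\ge\sin^2 x$. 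Hence $\sinh 2y=2\sinh y\cosh y>2\rho^2 y$, so all three terms of $\Phi'$ are negative at every zero.

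To finish, you still need the argument that transversality at every zero gives at most one zero. Suppose $x_1<x_2$ were zeros. Then $\Phi<0$ just to the right of $x_1$, so the first zero $x_3\in(x_1,x_2]$ after $x_1$ is approached from $\Phi<0$. This forces $\Phi'(x_3)\ge 0$, a contradiction. With these lines inserted, your proof covers all $\rho>0$ and is considerably shorter than the paper's case analysis.
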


Though we are primarily concerned with small $\rho$, we opt to give the stronger statement which applies for all $\rho > 0$. The Hurwitz theorem gives a very rough description of the roots: there are two roots of $E_\rho$ counting multiplicity near the even integer multiples of $\pi$. This is not enough. There is no uniformity in $k$ and the Hurwitz theorem by itself does not tell us which half-plane the roots reside in.

To prove Prop. \ref{erho_existence_uniqueness}, we must set up some notation and preliminary lemmas. Observe
\begin{align}
    E_\rho(z) &= \left(2\sin \frac{z}{2} +i\rho z\right) \left( 2\sin \frac{z}{2} - i \rho z \right) \label{erho_factored} \\ 
    &\eqqcolon E_\rho^+(z)E_\rho^-(z). \nonumber
\end{align}
\noindent By Ritt's theorem, this factorization into irreducible exponential polynomials is unique. Our proof of Lemma \ref{erho_existence_uniqueness} will focus on $E_\rho^-$ with a rescaling of $z$. In other words, it is sufficient to describe the solutions of $\sin z = i\rho z$ in the upper-right quadrant of the complex plane. The argument involving $E_\rho^+$ is symmetric. Our argument involves repeated application of an intermediate value theorem. For shorthand, we call this technique the X-Principle, named for the crossing of the graphs. We omit its proof.

\begin{lemma}[X-Principle]
\label{crossing}
Let $[a,b]$ be a finite closed interval and suppose that $f,g:[a,b] \to \R$ are continuous functions that are differentiable in the open interval $(a,b)$ such that $f' > g'$. Suppose $f(a) < g(a)$ and $g(b) < f(b)$, where we permit the possibilities $f(a) = -\infty, g(a) = \infty, f(b) = \infty, g(b) = -\infty$, or any combination thereof. Then there exists a unique  $c \in (a,b)$ such that $f(c) = g(c)$.
\end{lemma}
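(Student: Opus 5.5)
Consider $h \coloneq f - g$ on $[a,b]$. It is continuous on $[a,b]$ (in the extended sense at the endpoints), differentiable on $(a,b)$, and satisfies $h' = f' - g' > 0$ there. I will prove existence by an intermediate value argument and uniqueness by strict monotonicity.

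\emph{Existence.} In the finite case we have $h(a) < 0 < h(b)$, so the intermediate value theorem gives some $c \in (a,b)$ with $h(c) = 0$. If some endpoint values are infinite, continuity must be read as continuity into the extended reals, i.e. $f(t) \to -\infty$ as $t \to a^+$, and so on. Under that reading $h(t) \to -\infty$ as $t \to a^+$ in every permitted combination. For instance, if $f(a) = -\infty$ and $g(a)$ is finite, or $g(a) = +\infty$ and $f(a)$ is finite, or both, the difference tends to $-\infty$. The combination $f(a) = +\infty = g(a)$, which would be indeterminate, is excluded because it violates $f(a) < g(a)$. Symmetrically, $h(t) \to +\infty$ as $t \to b^-$. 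So I pick $a < a' < b' < b$ with $h(a') < 0 < h(b')$ and apply the ordinary intermediate value theorem on $[a',b']$.

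\emph{Uniqueness.} Suppose $c_1 < c_2$ are both zeros of $h$ in $(a,b)$. The mean value theorem on $[c_1,c_2]$ gives some $\xi$ with $h'(\xi) = 0$, contradicting $h' > 0$. So $h$ is strictly increasing and vanishes exactly once.

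I expect the only delicate point to be the convention for infinite endpoint values. My plan is to state explicitly that continuity at an endpoint with an infinite value means the corresponding one-sided limit, and to check that the hypotheses $f(a) < g(a)$ and $g(b) < f(b)$ rule out the indeterminate $\infty - \infty$ cases. Everything else is the standard intermediate value and mean value argument.
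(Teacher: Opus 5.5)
Your argument is correct. The paper explicitly omits the proof of the X-Principle, and your approach is the standard one the statement relies on: apply the intermediate value theorem to $h = f - g$ on a shrunken interval $[a',b']$, and use Rolle's theorem for uniqueness. Your extended-real reading of the endpoint conditions is the right way to interpret the paper's allowance of infinite values for functions declared to take values in $\R$. The only slip is wording: when both values at $a$ are finite, $h(t) \to h(a) < 0$ rather than $-\infty$, but your choice of $a'$ covers that case as well.
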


Now we establish some notation. For $k \in \Z$, let $I_k$ denote the interval $[(k-\frac{1}{2})\pi,(k+\frac{1}{2})\pi]$ and $J_k$ denote the interval $[(k-1)\pi,k\pi]$. For $k > 0$, let $K_k \coloneq I_k \cap J_k$, and for $k < 0$, let $K_k \coloneq I_k \cap J_{k+1}$. For all $k \in \Z$, define the strips $S_k \coloneq I_k \times \R$ and $T_k \coloneq J_k \times \R$, and for $k \neq 0$, let $U_k \coloneq K_k \times \R$. Note that $K_0$ and $U_0$ are undefined. Thus, for $k > 0$, $U_k = S_k \cap T_k$, and for $k < 0$, $U_k = S_k \cap T_{k+1}$. The use of a $\pm$ superscript denotes the intersection of a strip with the upper and lower half-planes respectively.

Next, let $a^0: [-1,1] \to [-\frac{\pi}{2},\frac{\pi}{2}]$ be the principal branch of $\arcsin$. For $k \in \Z$, let $a^k:[-1,1] \to I_k$ denote the branch of $\arcsin$ whose range is $[(k-\frac{1}{2})\pi,(k+\frac{1}{2})\pi]$. Due to the oddity of $\sin$, the branches alternate. By this, we mean $a^{k}(u) = (-1)^ka^0(u)+k\pi$ for all $u \in [-1,1], k \in \Z$. Each $a^k$ is a left inverse of the restriction of $\sin$ to $I_k$. 

We also need to define some constants along with a function which comes up repeatedly. For $u \in \R$, let $\Sigma(u) \coloneq u\sech u$. The derivative of $\Sigma$ is
\begin{align}
\Sigma'(u) &= (1 - u\tanh u)\sech u . \label{dSigma}
\end{align}
\noindent Let $\beta$ be the positive critical point of $\Sigma$, with the corresponding critical value $\alpha$, which is the global maximum value of $\Sigma$. Then $\beta \approx 1.1997$ and $\alpha = \Sigma(\beta) \approx 0.6627$. From \eqref{dSigma} have the following two identities:
\begin{subequations}
\label{alphabeta}
\begin{align}
\beta \sech \beta &= \alpha \label{bsechba} \\ 
\beta \tanh \beta &= 1. \label{btanhb1}
\end{align}
\end{subequations}

The range of $\Sigma$ is $[-\alpha,\alpha]$, and when restricted to the real subsets $\{|u| \leq \beta\}$ or $\{|u| \geq \beta\}$, $\Sigma$ is injective. Let $\sigma:[-\alpha,\alpha] \to (-\infty,-\beta] \cup [\beta, \infty)$ and $\varsigma:[-\alpha,\alpha] \to [-\beta,\beta]$ be the corresponding left inverses of $\Sigma$. We note that $\sigma$ is discontinuous at $0$, with $\lim\limits_{u \to 0^{\mp}} \sigma(u) = \pm \infty$. On the subdomains $(-\alpha,0)$ and $(0,\alpha)$, $\sigma$ is strictly decreasing and smooth. Meanwhile, $\varsigma$ is a continuous strictly increasing function which is smooth between the endpoints. It turns out the only arguments we will place inside $\sigma$ are nonnegative, so without further comment, we will restrict $\sigma$ to $[0,\alpha]$.

The following lemma, based on some elementary observations of the graphs of $\tan \theta$ and $\frac{-1}{\theta}$ in $\R^2$, concerns a sequence of transcendental constants.
\begin{lemma}
\label{xtanx}
For each $k \in \Z, k \neq 0$, there is a unique solution $c_k \in K_k$ to the equation $\theta\tan \theta + 1 =0$. Furthermore, $c_k \sim k\pi$ as $k \to \infty$.
\end{lemma}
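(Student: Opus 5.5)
The plan is to work on $K_k$ and apply the X-Principle (Lemma \ref{crossing}) to $f(\theta)=\tan\theta$ and $g(\theta)=-1/\theta$, or a reformulation of them that stays finite on the whole interval.

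First, fix $k>0$. Then $K_k=I_k\cap J_k=[(k-\tfrac12)\pi,k\pi]$.
\begin{itemize}
\item On the interior of this interval $\tan$ runs from $-\infty$ (as $\theta\to(k-\tfrac12)\pi^+$) up to $0$ (at $\theta=k\pi$).
\item $g(\theta)=-1/\theta$ is finite and negative on $K_k$.
\end{itemize}
The boundary conditions hold:
\begin{align*}
f\bigl((k-\tfrac12)\pi\bigr)&=-\infty<g\bigl((k-\tfrac12)\pi\bigr), & f(k\pi)&=0>-\frac{1}{k\pi}=g(k\pi).
\end{align*}
The derivative inequality also holds, since
\begin{align*}
f'(\theta)=\sec^2\theta\ge 1>\frac{1}{\theta^2}=g'(\theta),
\end{align*}
because $\theta\ge\pi/2>1$ on $K_k$. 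Lemma \ref{crossing} then gives a unique $c_k$ in the interior with $\tan c_k=-1/c_k$, i.e. $c_k\tan c_k+1=0$. The endpoints are not solutions: at $k\pi$ the left side equals $1$, and at $(k-\tfrac12)\pi$ the expression is undefined, or, after multiplying through by $\cos\theta$, reads $\theta\sin\theta\neq0$. So uniqueness holds on the closed interval $K_k$.

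For $k<0$, I use that $\theta\mapsto\theta\tan\theta$ is even, so the solution set is symmetric under $\theta\mapsto-\theta$. Here $K_k=I_k\cap J_{k+1}=[k\pi,(k+\tfrac12)\pi]$, which is exactly $-K_{-k}$. Hence $c_k=-c_{-k}$ is the unique solution in $K_k$. This is why $K_k$ was defined with $J_{k+1}$ for negative $k$.

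For the asymptotics, $c_k\in[(k-\tfrac12)\pi,k\pi]$ gives $|c_k-k\pi|\le\pi/2$, so $c_k/(k\pi)\to1$. One can sharpen this: $\tan c_k=-1/c_k\to0$, and $c_k$ lies in the branch of $\tan$ ending at $k\pi$, so in fact $c_k-k\pi\to0$. Only the crude bound is needed for $c_k\sim k\pi$.

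The only mildly delicate step is the infinite boundary value of $\tan$ at $(k-\tfrac12)\pi$, which Lemma \ref{crossing} explicitly permits. Alternatively, the X-Principle can be applied on $[(k-\tfrac12)\pi+\delta,k\pi]$ for small $\delta$, and uniqueness extended using monotonicity of $\tan\theta+1/\theta$. Nothing else should pose an obstacle.
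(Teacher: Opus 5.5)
Your argument is correct, and it localizes $c_k$ more directly than the paper does.

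The paper applies the X-Principle on the whole period interval $I_k$, where $\tan$ runs from $-\infty$ to $+\infty$, and so gets a unique root in $I_k$. It then forces that root into $K_k$ with a numerical check $c_1\approx 2.7984\in[\tfrac{\pi}{2},\pi]$ and a contradiction argument showing $c_{k+1}-c_k>\pi$, i.e.\ that $k\pi-c_k$ decreases. Finally it gets $k\pi-c_k\to 0$ from $\tan c_k=-1/c_k\to 0^-$.

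You instead work on $K_k$ itself and use the finite endpoint value $\tan(k\pi)=0>-1/(k\pi)$. This makes the localization automatic and free of numerics. You also treat $k<0$ explicitly via the evenness of $\theta\tan\theta$, where the paper just says ``without loss of generality''. And the crude bound $|c_k-k\pi|\le\pi/2$ already gives $c_k\sim k\pi$.

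The paper's route does yield two facts beyond the stated lemma, and both are used later:
\begin{enumerate}
\item uniqueness of the root on all of $I_k$, which is needed for the sign of $\frac{d}{dx}\Gamma_{S,k}$ on $I_k$;
\item the monotone decay $k\pi-c_k\searrow 0$, which underlies the bounds $c_k^2\cos c_k\ge c_2^2\cos c_2$ and ``$c_k\sec c_k$ increasing''.
\end{enumerate}
Both are immediate in your framework. On $(k\pi,(k+\tfrac12)\pi)$ one has $\theta\tan\theta+1>1$, so $I_k$ contains no other root. And since $k\pi-c_k\in(0,\tfrac{\pi}{2})$ with $\tan(k\pi-c_k)=1/c_k$, you have $k\pi-c_k=\arctan(1/c_k)$. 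This decreases to $0$ because $c_k$ increases without bound.
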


\begin{proof}
 Recall that the graph of $\tan \theta$ has vertical asymptotes at the boundaries of each $I_k$. Between each asymptote, $\tan \theta$ is strictly increasing from $-\infty$ to $\infty$, periodically. 

    Without loss of generality, assume $k > 0$. The function $-\frac{1}{\theta}$ is also strictly increasing for $\theta > 0$, tending to $0$ from below as $\theta \to \infty$. For all $\theta > 1$, $\frac{d}{d\theta}(\frac{-1}{\theta}) < 1$, whilst $\frac{d}{d\theta} (\tan \theta) = \sec^2 \theta \geq 1$ for all $\theta$. By the X-Principle, the graphs of $\tan \theta$ and $\frac{-1}{\theta}$ will intersect at a unique point between each vertical asymptote of $\tan \theta$. This is enough to claim the existence and uniqueness of $c_k$, but so far we can only conclude $c_k \in I_k$.

    A simple numerical search shows $c_1 \approx 2.7984$, which is precise enough to prove $c_1 \in K_1 = [\frac{\pi}{2},\pi]$. If we show that the sequence $k\pi - c_k$ is decreasing, we will prove $c_k \in K_k$ for all $k \geq 1$. Equivalently, we may show $c_{k+1} - c_k > \pi$. Suppose $c_{k+1} - c_k \leq \pi$. Since $\tan$ is $\pi$-periodic and strictly increasing within each period, with $c_k$ and $c_{k+1}$ lying in adjacent period domains, we can deduce $\tan c_{k+1} \leq \tan c_k$. However, as $c_k$ and $c_{k+1}$ both satisfy $\theta \tan \theta + 1 = 0$, we can deduce $\frac{-1}{c_{k+1}} \leq \frac{-1}{c_k}$. This contradicts the fact that $c_k < c_{k+1}$.
    
    We now prove asymptotic equivalence. Since the graph of $\frac{-1}{\theta}$ tends to the $x$-axis from below as $\theta \to \infty$, we conclude the ordinate of the intersection point within $U_k$ will tend to $0$ from below as $k \to \infty$. As $\tan \theta = 0$ precisely when $\theta$ is an integer multiple of $\pi$, and $k\pi$ is the only such value in $K_k$, we have that $k\pi - c_k$ is a monotonically decreasing sequence of positive numbers tending to zero. As $c_k$ is uniformly bounded away from zero, $\lim\limits_{k \to \infty} k\pi - c_k =0$ implies $\lim\limits_{k \to \infty} \frac{k\pi}{c_k} = 1$.
\end{proof}

By the limit comparison test with the harmonic series, we obtain the following corollary for free. However, we do not use it later. 
\begin{corollary} Let $a_k = k\pi - c_k$. Then $\sum\limits_{k=1}^\infty |a_k|^p$ converges for all $p > 1$.
\end{corollary}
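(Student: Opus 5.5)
The plan is to make the decay of $a_k = k\pi - c_k$ quantitative and then compare with a $p$-series. Lemma \ref{xtanx} only gives $a_k \to 0$, and that is not enough for summability. We therefore extract an explicit formula for $a_k$ from the defining equation $c_k \tan c_k + 1 = 0$.

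First, Lemma \ref{xtanx} gives $c_k \in K_k = [(k-\frac12)\pi, k\pi]$ for $k \geq 1$, so $a_k \in [0,\frac{\pi}{2}]$. In fact $a_k \neq 0$ and $a_k \neq \frac{\pi}{2}$, because $\theta \tan\theta + 1 = 0$ fails at both endpoints. Since $\tan$ is $\pi$-periodic and odd, we have $\tan c_k = \tan(c_k - k\pi) = -\tan a_k$. The defining equation $\tan c_k = -\frac{1}{c_k}$ then becomes
\begin{align*}
    \tan a_k = \frac{1}{c_k}, \qquad\text{that is,}\qquad a_k = \arctan \frac{1}{c_k},
\end{align*}
where the branch is the principal one because $a_k \in (0,\frac{\pi}{2})$.

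Next we use the elementary bound $\arctan x \leq x$ for $x \geq 0$, together with $c_k \geq (k-\frac12)\pi \geq \frac{k\pi}{2}$. Together these give
\begin{align*}
    0 < a_k \leq \frac{1}{c_k} \leq \frac{2}{\pi k}.
\end{align*}
Equivalently, since $c_k \sim k\pi$ by Lemma \ref{xtanx} and $\arctan x \sim x$ as $x \to 0$, we get $a_k \sim \frac{1}{k\pi}$. This is the form needed for a limit comparison.

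Finally, for $p > 1$ we have $|a_k|^p \leq (2/\pi)^p k^{-p}$. The series $\sum k^{-p}$ converges, so $\sum |a_k|^p$ converges by direct comparison, or by limit comparison using $a_k^p / (k\pi)^{-p} \to 1$.

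No step is a real obstacle. The only point needing care is the branch identification $a_k = \arctan(1/c_k)$, which relies on the localization $c_k \in K_k$ from Lemma \ref{xtanx} rather than merely $c_k \in I_k$. The asymptotic $a_k \sim \frac{1}{k\pi}$ also shows the exponent condition is sharp: for $p = 1$ the series diverges.
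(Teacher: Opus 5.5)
Your proof is correct and is essentially the paper's argument, which is a one-line comparison of $|a_k|^p$ with a $p$-series; you also supply the decay rate that this one-liner leaves implicit: Lemma \ref{xtanx} only gives $a_k \to 0$ and $c_k \sim k\pi$, which alone would not imply summability, whereas your identity $\tan a_k = 1/c_k$ yields $0 < a_k \le 2/(\pi k)$ and indeed $a_k \sim 1/(k\pi)$. One small correction to your closing remark: the branch identification needs only $c_k \in I_k$, not $c_k \in K_k$, since then $c_k - k\pi \in (-\frac{\pi}{2},\frac{\pi}{2})$ already lies in the principal range of $\arctan$ (and the bound $c_k \ge (k-\frac{1}{2})\pi$ holds on $I_k$ as well).
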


It is worth noting that the sequence $a_k$ is monotonically decreasing, and so $\lim\limits_{p \to \infty} ||a_k||_{\ell^p} = ||a_k||_{\ell^\infty} = a_1 \approx 0.3432.$ 

Finally, we present a simple bound which will be crucial later. Recall that $\arcsinh u = \int_0^u \frac{dt}{\sqrt{1+t^2}} =  \log( u + \sqrt{u^2 + 1})$, which is increasing, smooth on $\R$, with $\arcsinh 0 = 0$.

\begin{lemma}\label{arcsinhu_overu}
For all $u \in \R / \{0\}$, we have $0 < \frac{\arcsinh u}{u} < 1$.
\end{lemma}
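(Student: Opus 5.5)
The plan is to reduce to the case $u > 0$ using oddness, and then compare $\arcsinh$ with the identity through its integral representation.

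First, both $\arcsinh$ and the identity are odd functions. Hence the quotient $\frac{\arcsinh u}{u}$ is even, and it suffices to treat $u > 0$.

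For $u > 0$, I would use the integral representation recalled just before the statement:
\begin{align*}
    \arcsinh u = \int_0^u \frac{dt}{\sqrt{1+t^2}}.
\end{align*}
The integrand is continuous and strictly positive, so $\arcsinh u > 0$. This gives the lower bound $\frac{\arcsinh u}{u} > 0$.

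For the upper bound, note that $\frac{1}{\sqrt{1+t^2}} \leq 1$ on $[0,u]$. Equality holds only at $t=0$, so the inequality is strict on $(0,u]$. Since both sides are continuous and the strict inequality holds on a set of positive measure, integrating gives
\begin{align*}
    \arcsinh u < \int_0^u dt = u.
\end{align*}
Dividing by $u > 0$ yields $\frac{\arcsinh u}{u} < 1$.

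An equivalent route is to set $h(u) = u - \arcsinh u$. Then $h(0) = 0$ and $h'(u) = 1 - (1+u^2)^{-1/2} > 0$ for $u \neq 0$, so $h$ is strictly increasing and $h(u) > 0$ for $u > 0$.

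There is no real obstacle here. The only point needing care is that the upper inequality is strict, which comes from the integrand being strictly below $1$ away from $t = 0$. Evenness of the quotient then transfers both bounds to $u < 0$.
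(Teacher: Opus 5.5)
Your proof is correct and is essentially the paper's argument: for $u>0$, integrate the bounds $0 < (1+t^2)^{-1/2} < 1$ over $[0,u]$. The evenness of $\frac{\arcsinh u}{u}$ that you invoke simply spells out the paper's remark that ``the case for $u<0$ is similar.''
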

\begin{proof}
    Let $u > 0$. Notice that $0 < \frac{1}{\sqrt{1+t^2}} < 1$ for all $t \in (0,u]$. Integrate in $t$ over $[0,u]$ to get the desired result. The case for $u < 0$ is similar.
\end{proof}

\subsection{Proof of Proposition \ref{erho_existence_uniqueness} \'a la Polya-Schwengeler}
~\\
Let $\rho > 0$. We will locate the roots of the entire holomorphic function $f(z) = \frac{\sin z}{z} - i\rho$. Note the rescaling of $z$ from the definition of $E_\rho^-$ given Eq. \eqref{erho_factored}. After this rescaling, our task is to prove existence and uniqueness of a root in each of the strips $U_k^+$, where $k$ is a positive even integer.

Obviously, the set of roots is isolated and countable. Write $z \coloneq x + iy$, and consider the equation $f(z) = 0$.  As $f(0) \neq 0$, we may assume $z \neq 0$. By equating real and imaginary components, we get the system of equations 
\begin{subequations}
\begin{align}
    \rho x &= \cos x\sinh y. \label{GammaS} \\
    -\rho y &= \sin x\cosh y. \label{GammaC} 
\end{align}
\end{subequations}

Label the solution curves of Eqs. \eqref{GammaS} and \eqref{GammaC} in $\R^2 \cong \C$ by $\Gamma_S$ and $\Gamma_C$ respectively (the letters $S$ and $C$ stand for $\sinh$ and $\cosh$). All notation involving a $\Gamma$ will depend on $\rho$, which we suppress. 

We can write \eqref{GammaS} as 
\begin{align}
\label{GammaS_function}
    y &= \Gamma_S(x) = \arcsinh(\rho x \sec x)
\end{align}
The only issue with this rearrangement is the possible division by zero when dividing by $\cos x$, but if $\cos x = 0$, then \eqref{GammaS} implies $x = 0$, and in turn \eqref{GammaC} implies $y = 0$, and so we have a contradiction. In other words, there are no roots of $f$ on the boundaries of the strips $S_k$.

Let $\Gamma_{S,k} \coloneq \Gamma_S \cap S_k$. As mentioned earlier, $\arcsinh$ is well-defined, smooth, and strictly increasing on $\R$ with $\arcsinh 0 = 0$. Therefore, the vertical asymptotes of $\Gamma_S$ are the same as those of the graph of $x \mapsto \rho x \sec x$. These asymptotes occur at $x = (k + \frac{1}{2})\pi, k \in \Z$, which are the boundaries of the $S_k$.  Obviously the locations of these asymptotes are independent of $\rho$. In between asymptotes, $\Gamma_S$ is the graph of a smooth function. Consequently, each $\Gamma_{S,k}$ is a smooth curve with one connected component. With the exception of $I_0$, $\rho x \sec x$ is either strictly positive or strictly negative for all $x$ in a given $I_k$.  As $\sgn \arcsinh u = \sgn u$ for all $u \in \R$, the branches of $\Gamma_S$ (and hence the zeros in question) alternate between the upper and lower half-planes every other strip, in the same manner as $\rho x \sec x$. In other words,
\[
\begin{array}{ll}
    \Gamma_{S,k} \subseteq S_k^+, &  k \in \{2,4,6,\dots\} \cup \{-1,-3,-5,\dots\}  \\
    \Gamma_{S,k} \subseteq S_k^-, &  k \in \{1,3,5,\dots\} \cup \{-2,-4,-6,\dots\}. \\
\end{array} 
\]

In $S_0$, $\Gamma_S(x)$ is increasing, taking both positive and negative values. To prove this theorem, we apply the X-Principle to show $\Gamma_C$ intersects $\Gamma_S$ in a unique point within each $U^{\pm}_k$, with the sign chosen appropriately. 

We can differentiate \eqref{GammaS_function} to yield
\begin{align}
\label{GammaS_xderivative}
    \frac{d}{dx} \Gamma_{S,k}(x) &= \frac{\rho(x \tan x + 1)\sec x}{\sqrt{\rho^2x^2\sec^2 x + 1}}.
\end{align}
All of the factors in \eqref{GammaS_xderivative} are nonvanishing on $I_k$, save for $x\tan x + 1$. By Lemma \ref{xtanx}, there is a unique solution $c_k \in K_k \subseteq I_k$ satisfying $c_k \tan c_k + 1 = 0$. 

For the rest of this proof, assume $k$ is positive and even. For all $x \in I_k$, we have that $\frac{d}{dx} \Gamma_{S,k}(x) < 0$ when $x < c_k$, and $\frac{d}{dx} \Gamma_{S,k}(x) \geq 0$ when $x \geq c_k$. 

For $x \in [c_k,k\pi]$, we seek an upper bound of $\frac{d}{dx}\Gamma_{S,k}(x)$ in order to apply the X-Principle. For $x \in [c_k, k\pi]$, we have that $1 = \sec k\pi \leq \sec x \leq \sec c_k$. Furthermore, $x \tan x + 1 \leq 1$. Assembling these bounds into \eqref{GammaS_xderivative} yields
\begin{align}
\sup\limits_{x \in [c_k,k\pi]} \frac{d}{dx}\Gamma_{S,k}(x) &\leq \frac{\rho \sec c_k}{\sqrt{\rho^2 c_k^2 + 1}} \nonumber \\
&< \frac{\sec c_k}{c_k}. \label{supdGammaS}
\end{align}

Now we will examine $\Gamma_C$. Na\"ively, we could write \eqref{GammaC} as
\begin{subequations}
\begin{align}
\label{GammaC_function}
    x &= \Gamma_C(y) = -\arcsin(\rho y\sech y).
\end{align}
Unlike $\sec$, $\sech$ is well-defined on all of $\R$. And unlike $\arcsinh$, $\arcsin$ is multivalued with domain $[-1,1]$. Nevertheless, we may still separate variables and write either
\begin{align}
\label{GammaC_separated}
    -\rho y \sech y &= \sin x
\end{align}
\noindent or
\begin{align}
\label{GammaC_separated2}
    y\sech y &= -\frac{\sin x}{\rho},
\end{align}
\end{subequations}
without removing or introducing erroneous solutions because $\cosh$ is nonvanishing on $\R$.

Due to the $\rho$-bifurcation of $\Gamma_C$ depicted in Fig. \ref{sinc_irho_plots}, we divide into two cases which will dictate whether we use Eq. \eqref{GammaC_separated} or \eqref{GammaC_separated2}.

\noindent\textbf{Case 1, small $\rho$:}

First, suppose $\rho < \frac{1}{\alpha} \approx 1.5089$. For this case, we will write $\Gamma_{C,k} \coloneq \Gamma_C \cap S_k$. Then $\rho \Sigma(y)\in (-\rho\alpha,\rho\alpha) \subseteq[-1,1]$ for all $y \in \R$, which means that assuming $x \in I_k$, we may apply $a^k$ to both sides of \eqref{GammaC_separated}, and conclude $\Gamma_{C,k} = \{-a^k(\rho y \sech y) + iy : y \in \R\} \subseteq S_k$. To justify this equality of sets, we use the fact that each $a^k$ is one-to-one. So similarly to $\Gamma_{S,k}$, we can indeed conclude that $\Gamma_{C,k}$ is the graph of a smooth function, albeit in the $y$ variable as it ranges over $\R$.  As $\Sigma(y) \to 0$ as $|y| \to \infty$, we see $\Gamma_{C,k}$ has a vertical asymptote at $\{x = k\pi\}$. Likewise, by our assumption $\rho < \frac{1}{\alpha}$ and the fact $a^k$ only differs from $a^0$ by an additive constant, we are justified in differentiating \eqref{GammaC_function} to yield
\begin{align}
    \frac{d}{dy} \Gamma_{C,k}(y) &= -\frac{d}{dy} a^k(\rho\Sigma(y)) \nonumber\\
    &= -\frac{\rho\Sigma'(y)}{\sqrt{1 - \rho^2\Sigma(y)^2}} \nonumber\\
    &= \frac{\rho (y \tanh y - 1) \sech y}{\sqrt{1 - \rho^2 y^2 \sech\mathstrut^2 y}}. \label{GammaC_yderivative}
\end{align}

All of the factors of \eqref{GammaC_yderivative} are well-defined and nonvanishing for all $y \in \R$, except for $y\tanh y - 1$. We should also note the independence from $k$. Again, we have just applied the assumption that $\rho < \frac{1}{\alpha}$. So the sign of the derivative at a given $y \in \R$ is determined by the sign of $y \tanh y - 1$. The $y$-derivative is negative when $0 \leq y < \beta$, and positive when $y > \beta$.

Shifting gears momentarily to introduce notation, for any $0 < \rho < \frac{1}{\alpha}$, let $h_\rho \coloneq \frac{1}{\pi}a^0(\rho\alpha)$. Notice that $0 < h_{\rho} < \frac{1}{2}$ for all valid $\rho$. Thus, $\Gamma_{C,k}^+ \subseteq [(k-h_\rho)\pi,k\pi] \times \R^+ \subseteq U_k^+$. 

We will apply the X-Principle to $\Gamma_{S,k}$ and $\Gamma_{C,k}$ expressed as the graphs of functions of $x \in [(k-h_\rho)\pi,k\pi]$. As $\Gamma_{C,k}(y)$ has a single critical point at $y = \beta$, the inverse function theorem tells us $\Gamma_{C,k}^+$ is the union of the graphs of two functions of $x \in [(k-h_\rho)\pi,k\pi)$ whose ranges are $(0,\beta]$ and $[\beta,\infty)$ respectively. Now we further divide into subcases based on $\rho$, determining which of the two functions we will use for the X-Principle.

\noindent \textbf{Case 1a, small $\rho$, but not too small:} 

Suppose $(k-h_\rho)\pi < c_k$.  We will use the branch $\Gamma_{C,k} \cap \{y \geq \beta\}$. Since \eqref{GammaC_yderivative} tells us $\frac{d}{dy}\Gamma_{C,k}(y) > 0$ for $y > \beta$, the implicit function theorem allows $\Gamma_{C,k} \cap \{y \geq \beta\}$ to be expressed as the graph of a differentiable function of $x$ on the open interval $((k-h_\rho)\pi,k\pi)$. With an abuse of notation, we denote this function by $\Gamma_{C,k}(x)$. From \eqref{bsechba}, we can see this function extends continuously such that $\Gamma_{C,k}((k-h_\rho)\pi) = \beta$. Altogether, we can see that $\Gamma_{C,k}(x)$ is a continuous strictly increasing bijection from $[(k-h_\rho)\pi,k\pi)$ onto $[\beta,\infty)$.

To satisfy the boundary condition of the X-Principle, we claim $\Gamma_{S,k}$ lies above the line $\{y = \beta\}$. From \eqref{GammaS_xderivative}, we can see the minimal ordinate of $\Gamma_{S,k}$ is exactly $\arcsinh(\rho c_k \sec c_k)$. Dividing \eqref{btanhb1} by \eqref{bsechba} shows $\sinh \beta = \frac{1}{\alpha}$. Since $\sinh$ is increasing, we may apply it to both sides of our desired inequality, $\beta < \arcsinh(\rho c_k \sec c_k)$, and rearrange to see that our desired boundary condition is equivalent to $\cos c_k < \rho\alpha c_k$, which we will now show.

As assumed, $k\pi - c_k < \arcsin(\rho\alpha)$. As $\sin$ is increasing on $[0,\frac{\pi}{2}]$, we have that $\sin(k\pi - c_k) < \rho\alpha$. Hence,
\begin{align*}
\rho\alpha &> \sin(k\pi - c_k) \\
&= \cos c_k\sin k\pi - \cos k\pi\sin c_k \\
&= -\sin c_k.
\end{align*}
\noindent From the equation $c_k\tan c_k + 1 = 0$, we have that $-\sin c_k = \frac{\cos c_k}{c_k}$. Therefore, $\cos c_k < \rho\alpha c_k$ and we conclude the boundary condition of the X-Principle is satisfied.

Next, we will satisfy the differential inequality for the X-Principle. As we saw earlier, $\frac{d}{dy} \Gamma_{C,k}(y) > 0$ for $y > \beta$. Again, by the implicit function theorem, $\frac{d}{dx} \Gamma_{C,k}(x) > 0$ on the open interval $((k-h_\rho)\pi,k\pi)$. From \eqref{GammaS_xderivative}, we can see that $\frac{d}{dx}\Gamma_{S,k}(x) \leq 0$ for $x \in [(k-h_\rho)\pi,c_k]$, and so the only possible issue is that $\Gamma_{C,k}(x)$ and $\Gamma_{S,k}(x)$ both have nonnegative $x$-derivatives on $[c_k,k\pi]$.

Now, recall $\Sigma$ is a continuous, strictly decreasing bijection from $[\beta, \infty)$ onto $(0,\alpha]$. Let $y_\rho$ denote the number such that $c_k +iy_\rho \in \Gamma_{C,k} \cap \{y \geq \beta\}$. Then using our notation from earlier, $y_\rho = \sigma(\frac{-\sin c_k}{\rho})$. Note that we have just seen that $0 < -\frac{\sin c_k}{\rho } < \alpha$, and so $y_\rho$ is well-defined.  By the inverse function theorem and \eqref{GammaC_yderivative}, 
\begin{align*}
    \inf\limits_{x \in [c_k,k\pi)} \frac{d}{dx} \Gamma_{C,k}(x) &=  \inf\limits_{y \geq y_\rho} \left( \frac{d}{dy} \Gamma_{C,k}(y) \right)^{-1} \nonumber \\
    &= \inf\limits_{y \geq y_\rho} \frac{\sqrt{1 - \rho^2 y^2 \sech\mathstrut^2 y}}{\rho ( y\tanh y-1) \sech y}.
\end{align*}
\noindent For $y \geq y_\rho$, observe $\sqrt{1 - \rho^2 y^2 \sech\mathstrut^2 y} \geq \sqrt{1-\sin\mathstrut^2 c_k} = \cos c_k$. Furthermore, for $y \geq y_\rho,$
\begin{align*}
    \frac{\cosh y}{\rho(y\tanh y - 1)} &\geq \frac{\cosh y}{\rho y\tanh y} \\
    &= \frac{\coth y}{\rho \Sigma(y)} \\
    &\geq \frac{1}{\rho\Sigma(y_\rho)} \\
    &= \frac{-1}{\sin c_k} \\
    &= c_k \sec c_k.
\end{align*}
\noindent Note that we have used the fact that $\Sigma$ decreases on $[\beta, \infty)$. And now we can conclude 
\begin{align}
    \inf\limits_{x \in [c_k,k\pi)} \frac{d}{dx} \Gamma_{C,k}(x) &\geq c_k. \label{infdGammaC}
\end{align}
From \eqref{supdGammaS} and \eqref{infdGammaC}, it now suffices to prove $c_k \geq \frac{\sec c_k}{c_k}$ in order to show the X-Principle is satisfied on $[c_k, k\pi)$. Indeed, recall from Lemma \ref{xtanx} that $|c_k - k\pi|$ is a decreasing sequence of numbers in $[0,\pi]$. Hence as $k$ is a positive even integer, $c_k^2\cos c_k = c_k^2\cos(c_k - k\pi) \geq c_2^2\cos c_2.$ A simple numerical check verifies $c_2^2 \cos c_2 \approx 36.9794 > 1$, now by the X-Principle, we deduce the existence of a unique intersection point in $U_k^+$.

\noindent \textbf{Case 1b, extremely small $\rho$:}

Now suppose $c_k \leq (k-h_\rho)\pi$.  This implies $\Gamma_{C,k}^+ \subseteq [c_k,k\pi]\times \R^+$. In this case, we will use the branch $\Gamma_{C,k} \cap \{0 \leq y \leq \beta\}$. From \eqref{GammaC_yderivative}, we have that $\frac{d}{dy}\Gamma_{C,k}(y) < 0$ for $0 \leq y < \beta$. Again with an abuse of notation, we denote the corresponding function of $x \in [(k-h_\rho)\pi,k\pi]$ by $\Gamma_{C,k}(x)$. By the inverse function theorem, $\frac{d}{dx}\Gamma_{C,k}(x) < 0$.

The differential inequality is trivially satisfied since $\frac{d}{dx}\Gamma_{C,k}(x) < 0 < \frac{d}{dx}\Gamma_{S,k}(x)$ for $x \in ((k-h_\rho)\pi,k\pi)$. Likewise, $\Gamma_{C,k}(k\pi) = 0$ whilst $\Gamma_{S,k}(k\pi)$ is some positive number. All that remains is to satisfy the other boundary condition $\Gamma_{S,k}((k-h_\rho)\pi) < \Gamma_{C,k}((k-h_\rho)\pi) = \beta$. 

Our desired boundary condition is equivalent to the inequality $\rho (k-h_\rho)\pi \sec((k-h_\rho)\pi) < \sinh \beta = \frac{1}{\alpha}$. The function $x \mapsto x\sec x$ has critical points at the $c_k$'s and is increasing on $[c_k,(k + \frac{1}{2})\pi)$, since we assumed $k$ is positive and even. Now see that this subcase's assumption is equivalent to $\rho < \frac{\cos c_k}{c_k\alpha}$. Therefore, we have satisfied the boundary condition, and by the X-Principle, we conclude the existence of a unique intersection point in $[(k-h_\rho)\pi,k\pi] \times \R^+ \subseteq U_k^+$. 

To see that the branch $\Gamma_{C,k} \cap \{y \geq \beta\}$ does not intersect $\Gamma_{S,k}$, we note that we get the same derivative bounds from the prior subcase, except $\Gamma_{C,k} \cap \{y \geq \beta\}$ starts above $\Gamma_{S,k}$, and due to its larger $x$-derivative, will never intersect with $\Gamma_{S,k}$.

\noindent \textbf{Case 2, large $\rho$:}

Suppose $\rho \geq \frac{1}{\alpha}$. Our description of $\Gamma_S$ as a function of all $x \in \R$, save for the vertical asymptotes, is unaffected. Looking at \eqref{GammaC_separated2}, we note $|\frac{-\sin x}{\rho}| \leq \alpha$ for all $x \in \R$. Therefore, we may apply $\sigma$ and $\varsigma$ to both sides of \eqref{GammaC_separated2} and conclude $\Gamma_C = \{x + i \sigma(\frac{-\sin x}{\rho}): x \in \R\} \cup \{x + i\varsigma(\frac{-\sin x}{\rho}): x \in \R\} \eqqcolon \Gamma_{C,\sigma} \cup \Gamma_{C,\varsigma}$.

First, we show $\Gamma_{S,k} \cap \Gamma_{C,\varsigma} = \emptyset$. Since the range of $\varsigma$ is $[-\beta,\beta]$, it would suffice to show $\Gamma_{S,k}$ lies above the line $\{y = \beta\}$. We note that $c_k\sec c_k$ is an increasing sequence of positive numbers as $k$ ranges over the positive even integers. To see this, observe $c_k\sec c_k = -\csc c_k = -\csc(c_k - k\pi)$. By Lemma \ref{xtanx}, $c_k - k\pi \nearrow 0$. Then we can recall $-\csc$ is increasing on $[-\frac{\pi}{2},0)$. Now recalling that $\arcsinh$ is increasing, we can see that the minimal ordinate of $\Gamma_{S,k}$ is $\arcsinh(\rho c_k \sec c_k) \geq \arcsinh(\rho c_2 \sec c_2) \geq \arcsinh(\frac{1}{\alpha}c_2\sec c_2 )$. A numerical check confirms $\arcsinh (\frac{1}{\alpha} c_2\sec c_2) \approx 2.9323 > 1.1997 \approx \beta$.

For this case, we will relabel $\Gamma_{C,k} \coloneq \Gamma_{C,\sigma} \cap T_k$. That is, $\Gamma_{C,k}$ is the graph of the function $\Gamma_{C,k}(x) \coloneq \sigma(-\frac{\sin x}{\rho})$ over $x \in [(k-1)\pi,k\pi]$. Since $-\frac{\sin x}{\rho} > 0$ for $(k-1)\pi < x < k\pi$ and $\lim\limits_{x \to 0^+} \sigma(x) = +\infty$, we can see that the lines $\{x = (k-1)\pi\}$ and $\{x = k\pi\}$ are vertical asymptotes of $\Gamma_{C,k}$. To finish the proof, we apply the X-Principle for $\Gamma_{S,k}(x)$ and $\Gamma_{C,k}(x)$ over the interval $K_k = [(k-\frac{1}{2})\pi,k\pi]$.

Since $\Gamma_{S,k}$ has a vertical asymptote at $\{x = (k-\frac{1}{2})\pi\}$ and $\Gamma_{C,k}$ has a vertical asymptote at $\{x = k\pi\}$, the boundary conditions are satisfied. Observe that 
\begin{align}
    \frac{d}{dx}\Gamma_{C,k}(x) &= -\sigma'\left( \frac{-\sin x}{\rho} \right) \frac{\cos x}{\rho} \label{GammaC_xderivative}.
\end{align}
\noindent We can immediately see from \eqref{GammaC_xderivative} that $\frac{d}{dx}\Gamma_{C,k}(x) > 0$ for $x \in ((k-\frac{1}{2})\pi,k\pi)$. Our prior calculation from \eqref{GammaS_xderivative} shows $\frac{d}{dx}\Gamma_{S,k}(x) < 0$ on $((k-\frac{1}{2})\pi,c_k)$, and so from \eqref{supdGammaS}, it remains to prove 
\begin{align*}
    \inf\limits_{x \in [c_k, k\pi)}  \frac{d}{dx}\Gamma_{C,k}(x)  > \frac{\sec c_k}{c_k}.
\end{align*}

Let $u_\rho \coloneq \sigma(\frac{-\sin c_k}{\rho})$. By the inverse function theorem, for any $0 < v < \alpha$ and $u > \beta$ such that $v = \Sigma(u)$,
\begin{align*}
    \sigma'(v) &= \frac{1}{\Sigma'(u)} \\
    &= \frac{\cosh u}{1 - u\tanh u}.
\end{align*}
\noindent Therefore,
\begin{align*}
    \inf\limits_{x \in [c_k,k\pi)} -\sigma'\left( \frac{-\sin x}{\rho} \right) &= \inf\limits_{u \geq u_\rho} \frac{\cosh u}{u\tanh u - 1} \\
    &\geq \inf\limits_{u \geq u_\rho} \frac{\cosh u}{u\tanh u} \\
    &\geq \frac{\coth u_\rho}{\Sigma(u_\rho)} \\
    &= \rho c_k \sec c_k
\end{align*}
As $\frac{\cos x}{\rho} \geq \frac{\cos c_k}{\rho}$ for $x \in [c_k,k\pi)$, from \eqref{GammaC_xderivative} we can conclude 
\begin{align}
     \inf\limits_{x \in [c_k, k\pi)}  \frac{d}{dx}\Gamma_{C,k}(x) \geq c_k. \label{infdGammaC-largerho}
\end{align}
As $c_k > c_k\cos c_k > \frac{\sec c_k}{c_k}$, \eqref{supdGammaS} and \eqref{infdGammaC-largerho} show the differential inequality is satisfied, and so by the X-Principle, there is a unique intersection point in $U_k^+$. $\square$

\begin{figure}[h]
\captionsetup[subfigure]{justification=centering}
    \centering
    \begin{subfigure}{0.3\textwidth}
    \centering
    \includegraphics[width=2in]{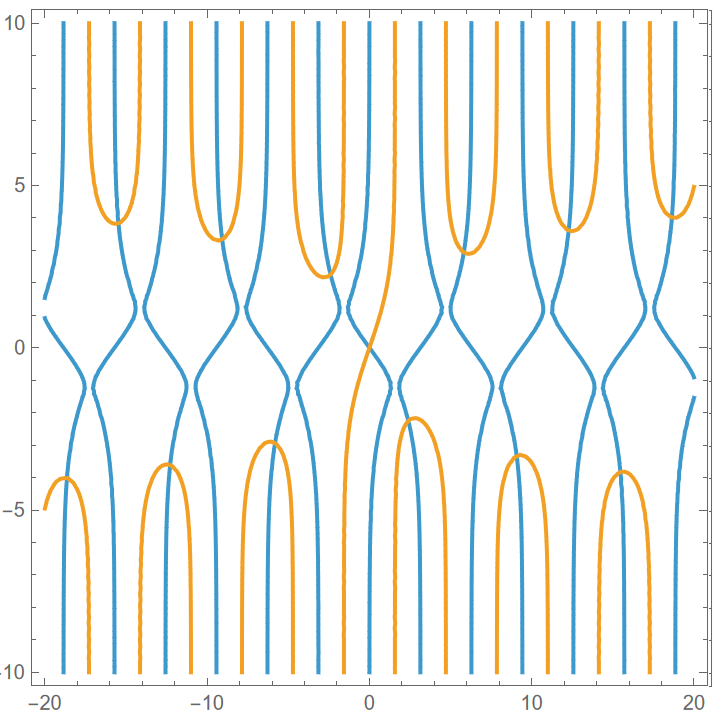}
    \caption{$\rho = 1.45$}
    \end{subfigure}%
    \begin{subfigure}{0.3\textwidth}
    \centering
    \includegraphics[width=2in]{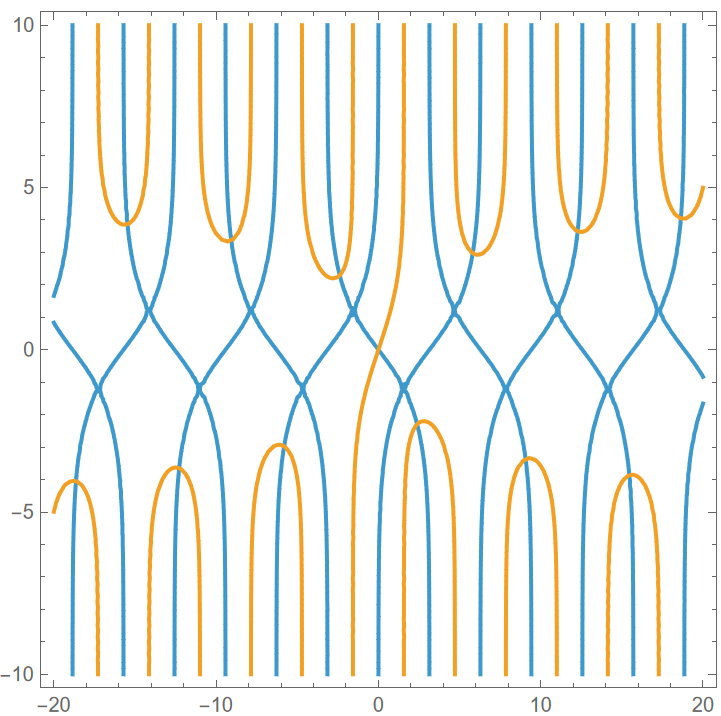}
    \caption{$\rho = \frac{1}{\alpha} \approx 1.5089$}
    \end{subfigure}
    \centering
    \begin{subfigure}{0.3\textwidth}
    \centering
    \includegraphics[width=2in]{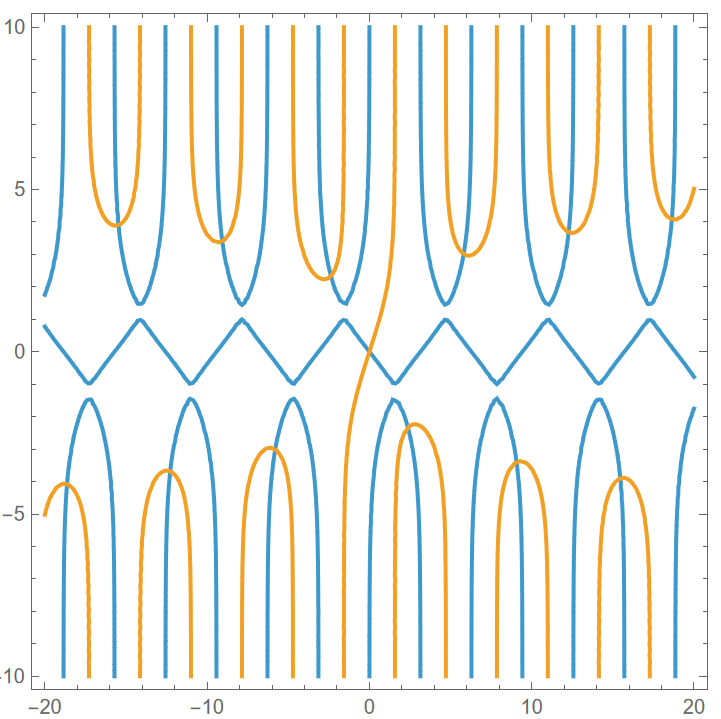}
    \caption{$\rho = 1.55$}
    \end{subfigure}%
    \caption{The bifurcation of $\Gamma_S$ (in orange) and $\Gamma_C$ (in blue) at $\rho = \frac{1}{\alpha} \approx 1.5089$.}
        \label{sinc_irho_plots}
\end{figure}
\subsection{Approximate poles of $M_\rho$}
~\\
For $\rho > 0$ and $k \in \N$, let $z_k(\rho)$ be the unique pole of $M_\rho$ (equivalently, the root of $E_\rho$) in the vertical strip $[(2k-1)\pi,2k\pi] \times \R_+$ in the complex plane, as given by Prop. \ref{erho_existence_uniqueness}. A subtle, yet important consequence of our proof is that there are no other poles of $M_\rho$ in the upper-right quadrant of $\C$. We also remark that our proof also implies $z_k(\rho)$ is the unique pole of $M_\rho$ in the entire vertical strip $[(2k-1)\pi,2k\pi] \times \R$. 

Using the evenness of $E_\rho$, and the identity $E_\rho(\overline{z}) = \overline{E_\rho(z)}$, we have $-\overline{z_k(\rho)}$ is a pole of $M_\rho$ in the upper-left plane, and so we have enumerated all of the poles of $M_\rho$ in the upper complex half-plane.

However, the poles of $M_\rho$ have no closed form expression, and so to actually compute the asymptotics, we will need to use an approximation. For $\rho > 0$ and $k \in \N$, let 
\begin{align*}
w_k(\rho) &\coloneq 2\pi k + 2i\arcsinh k \pi \rho.
\end{align*}
We will sometimes write $z_k$ and $w_k$ to clean up notation, though the $\rho$-dependence is still an essential consideration.

Now we calculate
\begin{align*}
    \sin w_k(\rho) &= i\cos 2k\pi \sinh( 2\arcsinh k \pi \rho )\\
    &= 2ik \pi \rho \sqrt{(k \pi \rho)^2 + 1}
\end{align*}
and
\begin{align*}
    4\sin^2 \frac{w_k(\rho)}{2} &= 4\left( i \cos k\pi \sinh( \arcsinh k\pi \rho) \right)^2 \\
    &= -4(k\pi\rho)^2.
\end{align*}
\noindent Therefore,
\begin{subequations}
\begin{align}
    E_\rho(w_k(\rho)) &= 4 k\pi \rho^2\left[ -\frac{\arcsinh^2 k\pi\rho}{k\pi} + 2i\arcsinh k\pi\rho\right]\label{erho_wk}\\
    E'_\rho(w_k(\rho)) &= 4 k\pi\rho\left[ \rho + i \left( \frac{\rho \arcsinh k \pi \rho}{k \pi} +  \sqrt{(k\pi\rho)^2 + 1} \right) \right]. \label{derho_w_k}
\end{align}
\end{subequations}
We also have some important bounds we will refer back to. If we assume $\rho < 1$, Lemma \ref{arcsinhu_overu} yields
\begin{subequations}
\begin{align}
    \left| E_\rho(w_k(\rho)) \right| &\leq 4 k \pi\rho^2 \left( | \arcsinh k\pi \rho| + |2i\arcsinh k \pi \rho|\right) \nonumber\\
    &\leq 4\sqrt{5}\rho (k\pi\rho)\arcsinh k\pi\rho.\label{erho_wk_upperbound}
\end{align}
Additionally, for all $\rho > 0$, we have
\begin{align}
\left| E'_\rho(w_k(\rho)) \right| &\geq 4k\pi\rho \sqrt{(k\pi\rho)^2 + 1}.    \label{derho_wk_lowerbound}
\end{align}
\end{subequations}
Now we can prove a $k$-dependent bound on the errors $|z_k(\rho) - w_k(\rho)|$ for sufficiently small $\rho$. The errors are not summable over $k$. We remark that Prop. \ref{erho_existence_uniqueness} is essential in deducing that the root of $E_\rho$ we obtain via Rouch\'e's theorem in a small disc around $w_k(\rho)$ is actually $z_k(\rho)$ and not some other root.
\begin{prop}\label{zkrk_bound_thm}
    For all $\rho > 0$ sufficiently small and for all $k \in \N$, we have
    \begin{align}
        |z_k(\rho) - w_k(\rho)| &\leq 2\sqrt{5} \frac{\rho \arcsinh k\pi\rho}{\sqrt{(k\pi\rho)^2 + 1}}. \label{zkrk_bound}
    \end{align}
\end{prop}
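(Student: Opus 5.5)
We will apply Rouch\'e's theorem to $E_\rho$ on the closed disc $D_k$ centered at $w_k = w_k(\rho)$ with radius
\begin{align*}
r_k \coloneq \frac{2|E_\rho(w_k)|}{|E'_\rho(w_k)|}.
\end{align*}
By \eqref{erho_wk_upperbound} and \eqref{derho_wk_lowerbound} (taking $\rho<1$),
\begin{align*}
r_k \leq 2\cdot\frac{4\sqrt{5}\rho(k\pi\rho)\arcsinh k\pi\rho}{4k\pi\rho\sqrt{(k\pi\rho)^2+1}} = 2\sqrt{5}\,\frac{\rho\arcsinh k\pi\rho}{\sqrt{(k\pi\rho)^2+1}},
\end{align*}
which is exactly the right-hand side of \eqref{zkrk_bound}. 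So it suffices to show that $E_\rho$ has a root in $D_k$ and that this root is $z_k(\rho)$. Compare $E_\rho$ with its linearization $L(z) \coloneq E_\rho(w_k) + E'_\rho(w_k)(z-w_k)$. $L$ has exactly one root in $D_k$, namely $w_k - E_\rho(w_k)/E'_\rho(w_k)$, at distance $r_k/2$ from $w_k$, and on $\partial D_k$ we have $|L(z)| \geq |E'_\rho(w_k)|r_k - |E_\rho(w_k)| = |E_\rho(w_k)| = \tfrac12|E'_\rho(w_k)|r_k$. By Taylor's theorem, $|E_\rho(z)-L(z)| \leq \tfrac12 r_k^2 \sup_{D_k}|E''_\rho|$ on $\partial D_k$. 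Hence the Rouch\'e hypothesis reduces to the absorbing inequality
\begin{align*}
r_k \sup_{D_k}|E''_\rho| < |E'_\rho(w_k)|.
\end{align*}

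The main obstacle is proving this inequality uniformly in $k$ for small $\rho$. We have $E''_\rho(z) = 2\rho^2 + 2\cos z$, and for $z \in D_k$, $|\cos z| \leq \cosh(\Imz z) \leq \cosh(2\arcsinh k\pi\rho + r_k)$. Now $\cosh(2\arcsinh u) = 1+2u^2$, and $r_k \leq 2\sqrt{5}\rho$ by Lemma \ref{arcsinhu_overu}, since $\arcsinh u/\sqrt{u^2+1} \leq 1$. So $\sup_{D_k}|E''_\rho| \leq 2\rho^2 + 2e^{2\sqrt5\rho}(1+2(k\pi\rho)^2) \leq C(1+(k\pi\rho)^2)$. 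Combined with $r_k \leq C\rho\,\arcsinh(k\pi\rho)/\sqrt{(k\pi\rho)^2+1}$, this gives
\begin{align*}
r_k\sup_{D_k}|E''_\rho| \leq C\rho\,\arcsinh(k\pi\rho)\sqrt{(k\pi\rho)^2+1}.
\end{align*}
The target is $|E'_\rho(w_k)| \geq 4k\pi\rho\sqrt{(k\pi\rho)^2+1}$, so after cancelling the square roots we need $C\arcsinh(k\pi\rho) < 4k\pi$. By Lemma \ref{arcsinhu_overu}, $\arcsinh(k\pi\rho) \leq k\pi\rho$, so this holds for all $k$ once $\rho$ is below a fixed threshold. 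The constants must be tracked carefully here; I expect this bookkeeping, particularly the $\cosh$ growth for large $k\pi\rho$, to be the delicate point. Rouch\'e then yields exactly one root of $E_\rho$ in $D_k$.

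It remains to identify this root as $z_k(\rho)$. Since $r_k \leq 2\sqrt5\rho < \pi/2$ for small $\rho$, $D_k$ lies in the strip $[(2k-\tfrac12)\pi,(2k+\tfrac12)\pi]\times\R$. The root lies in the upper half-plane, because the linearized root has imaginary part of order $\arcsinh k\pi\rho$, which dominates the error term. By the symmetries of $E_\rho$ and the proof of Prop. \ref{erho_existence_uniqueness}, the only roots of $E_\rho$ in the upper half-plane near $2k\pi$ are $z_k$, with real part at most $2k\pi$, and the reflection $-\overline{z_{-k}}$-type roots, which lie in the left half-plane. Hence the root must be $z_k(\rho)$. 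The possibility that it has real part slightly exceeding $2k\pi$ is excluded by the remark that there are no further roots in the upper-right quadrant beyond the $z_j$. This gives $|z_k(\rho)-w_k(\rho)| \leq r_k$, which is \eqref{zkrk_bound}.
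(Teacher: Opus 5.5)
Your proposal is correct and follows the paper's proof: the same Rouch\'e radius $r_k = 2|E_\rho(w_k)|/|E'_\rho(w_k)|$, the same reduction to the absorbing inequality $r_k \sup|E''_\rho| < |E'_\rho(w_k)|$, and the same $\cosh$ bound on $E''_\rho$ closed off with $\arcsinh u < u$. Your $L$ includes the constant term $E_\rho(w_k)$ whereas the paper's is $E'_\rho(w_k)(z-w_k)$, but both lead to that identical condition. Your identification of the root as $z_k$ is a more explicit version of the paper's appeal to Prop.~\ref{erho_existence_uniqueness}; it is cleanest to state it as follows: the whole disc lies in the upper half-plane because $r_k < 2\arcsinh k\pi\rho = \Imz w_k$, and its real parts lie within $\pi/2$ of $2k\pi$, so the root must be $z_k$.
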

\begin{remark}
    Specifically, we get that \eqref{zkrk_bound} holds for all $0 < \rho < \frac{2\sqrt{5}}{55} \approx 0.0813.$
\end{remark}

\begin{proof}
First, assume $0 < \rho < 1$ so we may apply \eqref{erho_wk_upperbound} and \eqref{derho_wk_lowerbound}. Define the linear approximation $L(z) = E'_\rho(w_k)(z-w_k)$, and let $r_k = r_k(\rho) \coloneq \frac{2|E_\rho(w_k)|}{|E'_\rho(w_k)|}$.

As 
\begin{align}
    r_k \leq 2\sqrt{5} \frac{\rho \arcsinh k \pi\rho}{\sqrt{(k\pi\rho)^2 + 1}}, \label{rkbound}
\end{align}
an application of Rouch\'e's theorem will guarantee the existence of a unique root of $E_\rho$ within a distance of $r_k$ of $w_k$. Prop. \ref{erho_existence_uniqueness} guarantees this root is $z_k$ and not some other root. So to complete the proof, we must satisfy the Rouch\'e hypothesis that $|E_\rho(z) - L(z) | < |L(z)|$ for all $z$ satisfying $|z-w_k| = r_k$.

We can choose $\rho$ sufficiently small to guarantee $r_k \leq 1$ for all $k$. Indeed, from Lemma \ref{arcsinhu_overu}, we have the bound $\frac{\arcsinh u}{\sqrt{u^2 + 1}} < \frac{\arcsinh u}{u} < 1$ for all $u > 0$, and so from \eqref{rkbound}, we can derive the inequality
\begin{align*}
    |r_k| \leq 2\sqrt{5}\rho &\leq 1,
\end{align*}
which is true if we henceforth assume $\rho \leq \frac{1}{2\sqrt{5}} \approx 0.2236$.

By the Taylor theorem with integral remainder, we have
\begin{align*}
    E_\rho(z) - L(z) &= E_\rho(w_k) + \int_0^1 (1-t)(z-w_k)^2 E''_\rho(w_k + t(z-w_k))dt.
\end{align*}
Hence,
\begin{align} \label{rouchetaylor}
    |E_\rho(z) - L(z)| \leq |E_\rho(w_k)| + \frac{r_k^2}{2} \sup\limits_{|\zeta - w_k| \leq r_k} |E''_\rho(\zeta)|.
\end{align}
We remind the reader that $E''_\rho(\zeta) = 2\rho^2 + 2\cos \zeta$. Now we seek an upper bound for $M \coloneq \sup\limits_{|\zeta - w_k| \leq r_k} |E''_\rho(\zeta)|$. By working ahead, one can anticipate the need for an estimate of the form $M \leq C((k\pi\rho)^2 + 1).$
Write $\zeta \coloneq w_k + re^{i\theta}$, where $0 \leq r \leq r_k \leq 1$, and recall the inequalities $\left|\cos z\right| \leq \cosh \Imz z $ for all $z\in \C$ and $\cosh(u+1) \leq e\cosh u$ for all $u \in \R$. We therefore have
\begin{align*}
    \left| \cos \zeta \right| &\leq \cosh \Imz \zeta \\
    &\leq \cosh (2\arcsinh k\pi\rho + r) \\
    &\leq e\cosh(2\arcsinh k \pi \rho) \\
    &= 2e(k\pi\rho)^2 + e.
\end{align*}
Hence, using the assumption $\rho < 1$ and the arithmetic fact $2e + 2 <4e < 11$,
\begin{align}
    M &\leq 2\rho^2 + 2\left|\cos \zeta \right| \nonumber \\
    &\leq 2 + 2e(k\pi\rho)^2 + 2e \nonumber \\
    &\leq 11((k\pi\rho)^2 + 1) \label{Mbound}
\end{align}

We return our attention to the Rouch\'e hypothesis. Recall that $|L(z)| = |E'_\rho(w_k)|r_k$, when $|z-w_k| = r_k$. Now from \ref{rouchetaylor} and the definition of $r_k$, we have
\begin{align*}
    |E_\rho(z) - L(z)| &\leq \frac{|E'_\rho(w_k)|r_k}{2} + \frac{Mr_k^2}{2} \\
    &\leq \frac{|L(z)|}{2} + \frac{Mr_k^2}{2}.
\end{align*}
And so the Rouch\'e hypothesis will be satisfied when
\begin{align*}
    \frac{Mr_k^2}{2} &< \frac{|L(z)|}{2}
\end{align*}
which is equivalent to
\begin{align}
    2M |E_\rho(w_k)|< |E'_\rho(w_k)|^2.
\end{align}
From \eqref{erho_wk_upperbound}, \eqref{derho_wk_lowerbound}, and \eqref{Mbound}, we need to strictly chain together the inequalities
\begin{align*}
    2M |E_\rho(w_k)| &\leq 88\sqrt{5}\rho(k\pi\rho)((k\pi\rho)^2 + 1) \arcsinh k \pi \rho
\end{align*}
and
\begin{align*}
    16(k\pi\rho)^2((k\pi\rho^2)+1) &\leq |E'_\rho(w_k)|^2.
\end{align*}
Once again, we apply Lemma \ref{arcsinhu_overu}, and we may complete the chain if we assume
\begin{align*}
   88\sqrt{5}\rho < 16.
\end{align*}

\end{proof}

\subsection{Asymptotics of the approximation $\tilde{I}(\rho)$}
~\\
From the calculus of residues, we have that for all poles $z$ of $M_\rho$,
\begin{align*}
    \text{Res}(M_\rho,z) &= \frac{\rho^2 + 1}{E'_\rho(z)}.
\end{align*}
Taking into account all of the relevant symmetries, we derive the exact formula
\begin{align}
    I(\rho) &= -4\pi(\rho^2 + 1) \sum\limits_{k=1}^\infty \Imz \frac{1}{E'_\rho(z_k(\rho))} .
\end{align}
As we saw, there is no closed-form expression for the poles of $M_\rho$. However, we can define an approximate sum $\tilde{I}(\rho)$, which sums over the approximate sequence of poles.

Define the approximation to $I(\rho)$ with the series
\begin{align*}
\tilde{I}(\rho) &\coloneq -4\pi  (\rho^2 + 1) \sum\limits_{k=1}^{\infty}  \Imz \frac{1}{E'_\rho(w_k(\rho))}.
\end{align*}
\noindent Now that we have a series in closed form, we can calculate its asymptotics. 

The asymptotic analysis is driven termwise by a complex perturbation $D_k(\rho)$ of the terms of the real series $G(\rho)$, both defined below. Rather than tracking the real and imaginary components of $\frac{1}{D_k}$ separately, the subtraction trick $\frac{1}{a} - \frac{1}{b} = \frac{b-a}{ab}$ allows for the modulus of the complex perturbation to be absorbed cleanly.
\begin{lemma}\label{Itilde-asymp}
    As $\rho \to 0$,
    \begin{align*}
        \tilde{I}(\rho) &\sim \frac{\log \frac{1}{\rho}}{\rho}.
    \end{align*}
\end{lemma}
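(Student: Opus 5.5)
We will compute $\tilde I(\rho)$ directly from the closed form \eqref{derho_w_k}. Write $u_k \coloneq k\pi\rho$ and
\begin{align*}
D_k(\rho) \coloneq \frac{E'_\rho(w_k)}{4k\pi\rho} = \rho + i\left(\frac{\rho\arcsinh u_k}{k\pi} + \sqrt{u_k^2+1}\right).
\end{align*}
Then $\Imz \frac{1}{E'_\rho(w_k)} = -\frac{1}{4k\pi\rho}\,\frac{\Imz D_k}{|D_k|^2}$, so
\begin{align*}
\tilde I(\rho) = \frac{\rho^2+1}{\rho}\sum_{k=1}^\infty \frac{1}{k}\,\frac{\Imz D_k}{|D_k|^2}.
\end{align*}
The unperturbed model is $D_k \approx i\sqrt{u_k^2+1}$. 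This suggests comparing with the real series
\begin{align*}
G(\rho) \coloneq \sum_{k=1}^\infty \frac{1}{k\sqrt{(k\pi\rho)^2+1}}.
\end{align*}
The plan has two steps. First, show $G(\rho)\sim \log\frac1\rho$. Second, show that $\sum_k \frac1k\left(\frac{\Imz D_k}{|D_k|^2} - \frac{1}{\sqrt{u_k^2+1}}\right) = O(1)$, or at least $o(\log\frac1\rho)$. Together these give $\tilde I(\rho)\sim \frac{\log(1/\rho)}{\rho}$, since $\rho^2+1\to1$.

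For the first step, we use Euler--Maclaurin, or simply monotone integral comparison, since $t\mapsto \frac{1}{t\sqrt{(\pi\rho t)^2+1}}$ is decreasing on $[1,\infty)$. This gives
\begin{align*}
G(\rho) = \int_1^\infty \frac{dt}{t\sqrt{(\pi\rho t)^2+1}} + O(1).
\end{align*}
Substituting $s=\pi\rho t$ turns the integral into $\int_{\pi\rho}^\infty \frac{ds}{s\sqrt{s^2+1}} = \arcsinh\frac{1}{\pi\rho}$. This equals $\log\frac{1}{\rho} + O(1)$, so $G(\rho)\sim\log\frac1\rho$.

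For the second step, following the hint in the text, we use the subtraction trick on $\frac{1}{D_k}$ rather than on its real and imaginary parts separately. Set $b_k \coloneq i\sqrt{u_k^2+1}$, so that $-\Imz \frac{1}{b_k} = \frac{1}{\sqrt{u_k^2+1}}$. Then
\begin{align*}
\left|\frac{1}{D_k}-\frac{1}{b_k}\right| = \frac{|b_k - D_k|}{|D_k||b_k|} \le \frac{\rho + \rho\,\frac{\arcsinh u_k}{k\pi}}{u_k^2+1},
\end{align*}
using $|D_k|\ge \Imz D_k \ge \sqrt{u_k^2+1}$. By Lemma \ref{arcsinhu_overu}, $\frac{\arcsinh u_k}{k\pi} = \rho\,\frac{\arcsinh u_k}{u_k} < \rho$, so the numerator is at most $2\rho$ when $\rho<1$. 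Since $\left|\Imz\frac{1}{D_k} - \Imz\frac{1}{b_k}\right| \le \left|\frac{1}{D_k}-\frac{1}{b_k}\right|$, the error series is bounded by
\begin{align*}
2\rho\sum_{k}\frac{1}{k(u_k^2+1)} \le 2\rho\sum_k \frac1k.
\end{align*}
That bound diverges, so it must be refined. We split the sum at $k\approx 1/\rho$. For $k\le 1/\rho$ the terms contribute at most $2\rho\,(1+\log\frac1\rho) = o(1)$. For $k>1/\rho$ we have $u_k^2+1\ge \pi^2\rho^2k^2$, so the tail is at most $2\rho\sum_{k>1/\rho}\frac{1}{\pi^2\rho^2k^3} = O(\rho)$. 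Hence the error is $o(1)$ and therefore $o(\log\frac1\rho)$.

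The main obstacle is this second step: we have to ensure the perturbation $\rho$ in $D_k$ is summably small uniformly across both regimes $u_k\lesssim1$ and $u_k\gg1$. The splitting at $k\sim1/\rho$, with the decay of $1/(u_k^2+1)$ in the tail, is what we would try first. Once both steps are done, combining them yields
\begin{align*}
\tilde I(\rho) = \frac{1+\rho^2}{\rho}\bigl(G(\rho)+o(1)\bigr) \sim \frac{\log(1/\rho)}{\rho}.
\end{align*}
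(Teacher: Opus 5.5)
Your proof is correct and follows essentially the same route as the paper. Both arguments compare with $G(\rho)=\sum_k \frac{1}{k\sqrt{(k\pi\rho)^2+1}}$, show $G(\rho)\sim\log\frac{1}{\rho}$ by the integral test, and use the subtraction trick to get the termwise error bound $\frac{2\rho}{k((k\pi\rho)^2+1)}$ (your $D_k$ is just $i$ times the paper's). The only difference is cosmetic: the paper sums that error bound with the integral test and a partial fraction decomposition to get $O(\rho\log\frac{1}{\rho})$, whereas you split the sum at $k\approx 1/\rho$, which reaches the same conclusion.
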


The proof of Lemma \ref{Itilde-asymp} relies on the following foundational Euler-Maclaurin calculation, whose technique can be referenced from several sources, including \cite{debruijn1981}.

For $\rho > 0$, let 
\begin{align*}
    G(\rho) &\coloneq \sum\limits_{k=1}^\infty \frac{1}{k\sqrt{(k\pi\rho)^2 + 1}}.
\end{align*}
\begin{lemma} \label{grhoasymptotics}
As $\rho \to 0$,
    \begin{align*}
        G(\rho) &\sim \log \frac{1}{\rho}.
    \end{align*}
\end{lemma}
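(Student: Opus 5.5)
The plan is to compare $G(\rho)$ with the integral $\int_1^\infty \frac{dx}{x\sqrt{(\pi\rho x)^2+1}}$ and then evaluate that integral in closed form. Set $\phi_\rho(x) \coloneq \frac{1}{x\sqrt{(\pi\rho x)^2+1}}$ for $x \geq 1$. This function is positive, continuous and strictly decreasing on $[1,\infty)$, since both factors in the denominator are increasing. The standard integral test sandwich (the zeroth-order Euler--Maclaurin estimate) then gives
\begin{align*}
    \int_1^\infty \phi_\rho(x)\,dx \;\leq\; G(\rho) \;\leq\; \phi_\rho(1) + \int_1^\infty \phi_\rho(x)\,dx .
\end{align*}
Since $0 < \phi_\rho(1) \leq 1$ uniformly in $\rho$, it suffices to show that $\int_1^\infty \phi_\rho(x)\,dx = \log\frac{1}{\rho} + O(1)$ as $\rho \to 0$. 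A bounded error is then negligible against $\log\frac1\rho \to \infty$, which yields $G(\rho)\sim\log\frac1\rho$.

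To evaluate the integral, substitute $u = \pi\rho x$. This turns it into
\begin{align*}
    \int_{\pi\rho}^\infty \frac{du}{u\sqrt{u^2+1}} .
\end{align*}
This integral has an elementary antiderivative $-\arcsinh\frac{1}{u}$, which one can verify by differentiating. Hence the integral equals $\arcsinh\frac{1}{\pi\rho} = \log\left(\frac{1}{\pi\rho} + \sqrt{\frac{1}{(\pi\rho)^2}+1}\right)$. For $\rho\to 0$ this is $\log\frac{2}{\pi\rho} + o(1) = \log\frac1\rho + \log\frac{2}{\pi} + o(1)$. Combining this with the sandwich gives
\begin{align*}
    \log\frac1\rho - C \leq G(\rho) \leq \log\frac1\rho + C
\end{align*}
for small $\rho$. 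Dividing by $\log\frac1\rho$ and letting $\rho\to0$ finishes the argument.

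No step is a serious obstacle. The only place needing care is justifying the sandwich: one uses the monotonicity of $\phi_\rho$ to bound each term $\phi_\rho(k)$ between $\int_k^{k+1}\phi_\rho$ and $\int_{k-1}^{k}\phi_\rho$ for $k\ge2$. One must also make sure that the boundary term is bounded uniformly in $\rho$, so that the error stays $O(1)$ and does not grow like $\log\frac1\rho$. If a sharper statement is wanted later, for instance an explicit constant term, one would apply the first-order Euler--Maclaurin formula instead. For the asymptotic equivalence claimed here, the crude comparison suffices.
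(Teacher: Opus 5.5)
Your proposal is correct and matches the paper's proof essentially step for step. Both use the integral-test sandwich with the boundary term $g_\rho(1)\le 1$, the substitution $u=\pi\rho t$, and a closed-form evaluation; your $\arcsinh\frac{1}{\pi\rho}$ equals the paper's $\log\frac{1+\sqrt{(\pi\rho)^2+1}}{\pi\rho}$, so in both cases $G(\rho)-\log\frac1\rho$ stays bounded as $\rho\to0$.
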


\begin{proof}
By the limit comparison test with the convergent series $\sum\limits_{k} \frac{1}{(k\pi\rho)^2}$, we have that $G(\rho)$ is indeed a finite positive number for all $\rho > 0$.

Let $g_\rho: [1,\infty) \to \R$ be defined by
\begin{align*}
    g_\rho(t) &\coloneq \frac{1}{t\sqrt{(\pi\rho t)^2 + 1}}.
\end{align*}
Note $g_\rho$ is nonnegative and decreasing. By the integral test for series,
\begin{align}\label{g_integraltest}
    \int_1^{\infty} g_\rho(t)dt \leq G(\rho) \leq g_\rho(1) + \int_1^{\infty}g_\rho(t)dt.
\end{align}
Now with the substitution $u = \pi \rho t$, we have
\begin{align*}
    \int_1^{\infty} g_\rho(t)dt &= \int_{\pi\rho}^{\infty} \frac{du}{u\sqrt{u^2 + 1}} \\
    &= \log \frac{u}{1 + \sqrt{u^2 + 1}} \bigg\rvert^{\infty}_{\pi\rho} \\
    &= \log \frac{1 + \sqrt{(\pi\rho)^2+1}}{\pi\rho} \\
    &= \log \frac{1}{\rho} + \log \frac{1+\sqrt{(\pi\rho)^2 + 1}}{\pi}.
\end{align*}
Therefore from \ref{g_integraltest}, we have for all $\rho > 0$,
\begin{align}\label{G_asympequiv}
    \log \frac{1+\sqrt{(\pi\rho)^2 + 1}}{\pi} \leq G(\rho) - \log \frac{1}{\rho} &\leq \frac{1}{\sqrt{(\pi\rho)^2 + 1}} + \log \frac{1+\sqrt{(\pi\rho)^2 + 1}}{\pi}.
\end{align}
By taking $\rho \to 0$ on both sides of \ref{G_asympequiv}, we have that the quantity $|G(\rho) - \log\frac{1}{\rho}|$ is bounded for all $\rho$ sufficiently small. By recalling that $\log \frac{1}{\rho} \to \infty$ as $\rho \to 0$, we conclude $\lim\limits_{\rho \to 0} \frac{G(\rho)}{\log \frac{1}{\rho}} = 1$. Equivalently, $G(\rho) \sim \log \frac{1}{\rho}$ as $\rho \to 0$.

\end{proof}

We now prove Lemma \ref{Itilde-asymp}.

\begin{proof} 
\noindent Recalling the trivial property $\left|\Imz z\right| \leq |z|$, we can use \ref{derho_wk_lowerbound} to conclude
\begin{align}
    |\tilde{I}(\rho)| &\leq \left( \frac{\rho^2 +1}{\rho} \right) G(\rho). \label{Itilde_absolutebound}
\end{align}
Hence $\tilde{I}(\rho)$ is indeed a positive finite number for all $\rho > 0$. Next, let us define
\begin{align}\label{dkrho}
    D_k(\rho) &\coloneq \frac{\rho \arcsinh k \pi \rho}{k\pi} + \sqrt{(k\pi\rho)^2 + 1} - i\rho,
\end{align}
\noindent which allows us to write
\begin{align*}
    \tilde{I}(\rho) &= (\rho^2 + 1)\frac{1}{\rho}\sum\limits_{k=1}^\infty \Rez \frac{1}{kD_k(\rho)}.
\end{align*}
\noindent As $\rho^2 + 1 \to 1$ as $\rho \to 0$, it now suffices to ignore the $\frac{1}{\rho}$ factor and prove $F(\rho) \coloneq\sum\limits_{k = 1}^\infty \Rez \frac{1}{kD_k(\rho)} \sim G(\rho)$ as $\rho \to 0$.

\noindent We will compare the terms of $F$ and $G$. From Eq. \ref{dkrho}, we have
\begin{align*}
    |\Rez D_k(\rho)| &\geq \sqrt{(k\pi\rho)^2 + 1}.
\end{align*}
\noindent Similar to before, we use the trivial property $\left|\Rez z \right| \leq |z|$ to conclude,
\begin{align*}
    \left| \Rez \frac{1}{kD_k(\rho)} - \frac{1}{k\sqrt{(k\pi\rho)^2 + 1}} \right| &\leq \frac{1}{k} \left| \frac{ -\frac{\rho \arcsinh k \pi \rho}{k \pi} + i\rho}{D_k(\rho)\sqrt{(k\pi\rho)^2 + 1}} \right| \\
    &\leq \frac{1}{k((k\pi\rho)^2 + 1)} \left|\frac{\rho \arcsinh k \pi \rho}{k \pi} - i\rho \right| \\
    &\leq \frac{2\rho}{k((k\pi\rho)^2 + 1)}.
\end{align*}
Note that in the last inequality, we have used Lemma \ref{arcsinhu_overu} and we have introduced the assumption that $\rho < 1$. Summing over $k$ yields
\begin{align}\label{fgbound}
    |F(\rho) - G(\rho)| &\leq  \sum\limits_{k= 1}^\infty \frac{2\rho}{k((k\pi\rho)^2 + 1)}.
\end{align}
Another Euler-Maclaurin argument will show the righthand side of \eqref{fgbound} tends to $0$ as $\rho \to 0$, finishing the proof. Indeed, let $H(\rho) \coloneq  \sum\limits_{k= 1}^\infty \frac{2\rho}{k((k\pi\rho)^2 + 1)}$, and let $h_\rho: [1,\infty) \to \R$ be defined by $h_\rho(t) = \frac{2\rho}{t((\pi\rho t)^2 + 1)}$, which is nonnegative and decreasing. By the limit comparison test with the convergent series $\sum\limits_{k} \frac{1}{k^3}$, $H(\rho)$ is indeed a positive finite number for all $\rho > 0$. 

By the integral test for series,
\begin{align}
H(\rho) &\leq h_\rho(1) + \int_1^{\infty} h_\rho(t)dt \nonumber\\ 
&= \frac{2\rho}{(\pi\rho)^2 + 1} + \int_1^{\infty} h_\rho(t)dt \nonumber\\ 
&\leq 2\rho\left(1  + \int_1^{\infty} \frac{dt}{t((\pi \rho t)^2 + 1)} \right). \label{hbound}
\end{align}
Using a partial fraction decomposition, 
\begin{align*}
    \int_1^{\infty} \frac{dt}{t((\pi \rho t)^2 + 1)} &= \int_{1}^{\infty} \left[ \frac{1}{t} - \frac{(\pi \rho)^2t}{(\pi\rho t)^2+ 1} \right]dt \\
    &= \log \frac{1}{\pi \rho} + \frac{1}{2}\log (1 + (\pi\rho)^2).
\end{align*}
Finally, from \eqref{hbound}, we can see $H(\rho) \to 0$ as $\rho \to 0$. Thus, \eqref{fgbound} yields $F(\rho) \sim G(\rho)$ as $\rho \to 0$, and we are done.
\end{proof}

\subsection{Complex asymptotic equivalence of the infinite series}
~\\
We now come to the execution of the complex asymptotic equivalence. Only the series matters, so we can ignore the $-4\pi(\rho^2 +1)$ factors. Define
\begin{align*}
    J(\rho)& \coloneq \sum\limits_{k=1}^\infty \frac{1}{E'_\rho(z_k(\rho))} \\
    \tilde{J}(\rho) &\coloneq \sum\limits_{k=1}^\infty \frac{1}{E'_\rho(w_k(\rho))}.
\end{align*}
First, we show $\tilde{J}$ satisfies the hypotheses of the transfer theorem. Therefore, showing $J(\rho) \sim \tilde{J}(\rho)$ as $\rho \to 0$ will imply our desired Theorem \ref{irho_asymptotics}.

\begin{prop}\label{jtildetransferthm}
The function $\tilde{J}$ satisfies the condition of the transfer theorem, Theorem \ref{transferthm}. In fact, for all $\rho > 0$ and $k \in \N$,
\begin{enumerate}
    \item $\Imz \frac{1}{E'_\rho(w_k(\rho))} < 0$, and
    \item $\left| \Imz \frac{1}{E'_\rho(w_k(\rho))} \right| \geq \frac{\pi}{\sqrt{\pi^2 + 1}} \left| \frac{1}{E'_\rho(w_k(\rho))} \right|$.
\end{enumerate}  
\end{prop}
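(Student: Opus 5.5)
We work directly from the closed form \eqref{derho_w_k}. Write $u \coloneq k\pi\rho > 0$ and
\begin{align*}
E'_\rho(w_k(\rho)) = 4k\pi\rho\,(A + iB), \qquad A \coloneq \rho, \qquad B \coloneq \frac{\rho\arcsinh u}{k\pi} + \sqrt{u^2+1}.
\end{align*}
Then
\begin{align*}
\frac{1}{E'_\rho(w_k)} = \frac{1}{4k\pi\rho}\cdot\frac{A - iB}{A^2+B^2}.
\end{align*}
Since $4k\pi\rho > 0$ and $B > 0$ (both summands are positive, using $\arcsinh u > 0$ for $u>0$), the imaginary part equals $-B/(4k\pi\rho(A^2+B^2))$. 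This is strictly negative, which is the sign condition (1) for all $\rho>0$ and $k\in\N$.

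For non-tangency (2), note that
\begin{align*}
\frac{|\Imz(1/E'_\rho(w_k))|}{|1/E'_\rho(w_k)|} = \frac{B}{\sqrt{A^2+B^2}},
\end{align*}
because multiplying by the positive real factor $1/(4k\pi\rho)$ changes neither the argument nor the ratio. This ratio is an increasing function of $B/A$, so it suffices to show $B/A \geq \pi$. Then
\begin{align*}
\frac{B}{\sqrt{A^2+B^2}} = \frac{1}{\sqrt{1 + (A/B)^2}} \geq \frac{1}{\sqrt{1 + \pi^{-2}}} = \frac{\pi}{\sqrt{\pi^2+1}}.
\end{align*}

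The lower bound on $B/A$ is the only step needing any care, and it must hold uniformly in $\rho$, including for large $\rho$. We have
\begin{align*}
\frac{B}{A} = \frac{\arcsinh u}{k\pi} + \frac{\sqrt{u^2+1}}{\rho}.
\end{align*}
The first term is nonnegative. For the second, $\sqrt{u^2+1} > u = k\pi\rho$, so $\sqrt{u^2+1}/\rho > k\pi \geq \pi$. Hence $B/A > \pi$ for all $\rho>0$ and all $k\geq 1$, which gives (2) with the constant $c = \pi/\sqrt{\pi^2+1}$.

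No case split in $\rho$ is needed, and Lemma \ref{arcsinhu_overu} is not required here, only positivity of $\arcsinh$ on $\R^+$. The only point to check is that the constant $c$ does not depend on $k$ or $\rho$, which holds by the bound $\sqrt{u^2+1} > u$.
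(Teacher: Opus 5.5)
Your proof is correct and follows essentially the same route as the paper. The sign condition comes from $\Imz E'_\rho(w_k) > 0$ in the closed form for $E'_\rho(w_k)$. Non-tangency comes from the ratio bound $\Imz E'_\rho(w_k)/\Rez E'_\rho(w_k) \geq \pi$, obtained from $\sqrt{(k\pi\rho)^2+1} > k\pi\rho$, together with monotonicity of $t \mapsto t/\sqrt{t^2+1}$, giving $c = \pi/\sqrt{\pi^2+1}$ uniformly in $k$ and $\rho$.
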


\begin{proof}
Recall Eq. \eqref{derho_w_k}, which states
\begin{align*}
 E'_\rho(w_k) &= 4k\pi \rho\left[ \rho + i \left( \frac{\rho \arcsinh k \pi \rho}{k \pi} +  \sqrt{(k\pi\rho)^2 + 1} \right) \right].
\end{align*}

    First, we have the general identity $\Imz \frac{1}{z} = \frac{- \Imz z}{|z|^2}$. Then from direct inspection of Eq. \eqref{derho_w_k}, we see $\Imz E'_\rho(w_k) > 0$ for all $\rho > 0$ and $k \in \N$. Therefore the sign condition is satisfied.

    Second, observe that
\begin{align}
    \frac{\Imz E'_\rho(w_k)}{\Rez E'_\rho(w_k)} &= \frac{4k\pi\rho\left( \frac{\rho \arcsinh k \pi \rho}{k \pi} +  \sqrt{(k\pi\rho)^2 + 1} \right)}{4k\pi\rho^2} \nonumber \\
    &= \frac{\arcsinh k\pi\rho}{k\pi} + \frac{\sqrt{(k\pi\rho)^2 + 1}}{\rho} \nonumber \\
    &\geq \frac{k\pi\rho}{\rho} \geq \pi. \label{reimratiobound}
\end{align}
Now, we can derive the general identity
\begin{align*}
    \left|\Imz \frac{1}{z} \right| &= \frac{ \left|\Imz z \right|}{\left|\Rez z\right|^2 + \left|\Imz z\right|^2}\\
    &= \frac{\left|\Imz z \right|}{\sqrt{\left|\Rez z\right|^2 + \left|\Imz z\right|^2}} \left| \frac{1}{z}\right|\\
    &= \frac{\left| \frac{\Imz z}{\Rez z}  \right|}{\sqrt{1 + \left| \frac{\Imz z}{\Rez z}\right|^2}} \left| \frac{1}{z}\right|.
\end{align*}
As the function $u \mapsto \frac{u}{\sqrt{u^2+1}}$ is increasing, we can apply \eqref{reimratiobound} to get the desired bound with $c = \frac{\pi}{\sqrt{\pi^2 + 1}}$. This satisfies the non-tangency condition.
\end{proof}

Even though the transfer theorem, as stated, only requires us to satisfy conditions with $\tilde{J}(\rho)$, we expect $J(\rho)$ to also satisfy the same conditions. However, as $z_k(\rho)$ does not have a closed form expression, it would be difficult to show $J(\rho)$ satisfies the sign and non-tangency conditions directly. As we have estimates on the errors $|z_k - w_k|$ given in Prop. \ref{zkrk_bound_thm}, we have a viable strategy to show $J(\rho)$ also satisfies the sign and non-tangency conditions. While the result of this proposition is not needed for the asymptotic equivalence of Prop. \ref{Jasymptotics}, we will need the controlling inequality \ref{etacontrol}.

Unlike the case with $\tilde{J}$, we can only assert the sign and non-tangency conditions for $\rho$ sufficiently small.

\begin{prop}\label{jtransferthm}
The function $J$ satisfies the condition of the transfer theorem, Theorem \ref{transferthm}. There exists $c > 0$ and $\rho_0 > 0$ such that for all $0 < \rho < \rho_0$ and $k \in \N$,
\begin{enumerate}
    \item $\Imz \frac{1}{E'_\rho(z_k(\rho))} < 0$, and
    \item $\left| \Imz \frac{1}{E'_\rho(z_k(\rho))} \right| \geq c\left| \frac{1}{E'_\rho(z_k(\rho))} \right|$.
\end{enumerate}  
\end{prop}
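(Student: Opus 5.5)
The idea is to show that $E'_\rho(z_k)$ is a small relative perturbation of $E'_\rho(w_k)$, uniformly in $k$, and then inherit both conditions from Prop. \ref{jtildetransferthm}. Concretely, we aim for a perturbation
\begin{align}
    \frac{1}{E'_\rho(z_k)} &= \frac{1}{E'_\rho(w_k)}(1+\eta_k), \qquad |\eta_k| \leq C\rho, \label{etacontrol}
\end{align}
valid for all $k$ once $\rho$ is small. Granting \eqref{etacontrol}, write $b_k = 1/E'_\rho(w_k)$. Then
\[
\Imz\frac{1}{E'_\rho(z_k)} = \Imz b_k + \Imz(b_k\eta_k), \qquad |\Imz(b_k\eta_k)| \leq C\rho|b_k|.
\]
Prop. \ref{jtildetransferthm} gives $\Imz b_k \leq -c_0|b_k|$ with $c_0 = \pi/\sqrt{\pi^2+1}$. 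Hence
\[
\Imz\frac{1}{E'_\rho(z_k)} \leq -(c_0 - C\rho)|b_k|,
\]
which is negative once $C\rho < c_0$. Since $|1/E'_\rho(z_k)| \leq (1+C\rho)|b_k|$, non-tangency holds with $c = (c_0 - C\rho_0)/(1+C\rho_0)$.

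To prove \eqref{etacontrol}, set $\delta_k = z_k - w_k$. By Prop. \ref{zkrk_bound_thm}, $|\delta_k| \leq 2\sqrt5\,\rho\arcsinh(k\pi\rho)/\sqrt{(k\pi\rho)^2+1} \leq 2\sqrt5\rho \leq 1$. By the mean value inequality along the segment,
\[
|E'_\rho(z_k) - E'_\rho(w_k)| \leq |\delta_k| \sup_{|\zeta - w_k|\leq r_k}|E''_\rho(\zeta)| \leq 11\big((k\pi\rho)^2+1\big)|\delta_k|,
\]
reusing the bound \eqref{Mbound} from the proof of Prop. \ref{zkrk_bound_thm}. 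Dividing by the lower bound \eqref{derho_wk_lowerbound}, $|E'_\rho(w_k)| \geq 4k\pi\rho\sqrt{(k\pi\rho)^2+1}$, gives
\[
\frac{|E'_\rho(z_k) - E'_\rho(w_k)|}{|E'_\rho(w_k)|} \leq \frac{11\cdot 2\sqrt5\,\rho\arcsinh(k\pi\rho)}{4k\pi\rho} \leq \frac{11\sqrt5}{2}\,\rho,
\]
where the last step uses Lemma \ref{arcsinhu_overu}, $\arcsinh u/u<1$. Call this relative error $\epsilon_k$, so $|\epsilon_k| \leq C'\rho$ uniformly in $k$. Then $1/E'_\rho(z_k) = b_k/(1+\epsilon_k)$, so $\eta_k = -\epsilon_k/(1+\epsilon_k)$ and $|\eta_k| \leq 2C'\rho$ for $\rho$ small.

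The main obstacle is uniformity in $k$. The $\sqrt{(k\pi\rho)^2+1}$ factors must cancel between the growth of $E''_\rho$ near $w_k$, which is of order $(k\pi\rho)^2+1$, and the product of the error bound and $|E'_\rho(w_k)|$. This is exactly why the sharper form of Prop. \ref{zkrk_bound_thm}, with its $\sqrt{(k\pi\rho)^2+1}$ denominator, is needed. The supremum of $E''_\rho$ must be taken over a disc containing $z_k$. The disc of radius $r_k$ about $w_k$ is such a disc, since Rouché located $z_k$ there, so \eqref{Mbound} applies verbatim.
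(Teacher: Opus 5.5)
Your proposal is correct and follows essentially the same route as the paper. You bound $|E'_\rho(z_k)-E'_\rho(w_k)|$ through $E''_\rho$ and the Rouch\'e estimate \eqref{zkrk_bound}, then divide by \eqref{derho_wk_lowerbound} to get a $k$-uniform relative error of size $\frac{11\sqrt5}{2}\rho$, and finally inherit the sign and non-tangency conditions from Prop.~\ref{jtildetransferthm}. The differences are cosmetic: you justify the $E''_\rho$ bound by placing $z_k$ inside the Rouch\'e disc rather than via $|\zeta|<1$, and you obtain the constant $(c_0-C\rho_0)/(1+C\rho_0)$ directly rather than chaining $c_1 c_0 c_2$.
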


\begin{proof}
 We fix $k \in \N$, and begin with the assumption that $\rho > 0$ is sufficiently small so that the estimates on $|z_k - w_k|$ from Prop. \ref{zkrk_bound_thm} apply. Let $C_R = 2\sqrt{5}$ be the bounding constant from that proposition. The $R$ stands for Rouch\'e.

We begin with an estimate of $|E'_\rho(z_k) - E'_\rho(w_k)|$. By the fundamental theorem of calculus,
\begin{align*}
    E'_\rho(z_k) - E'_\rho(w_k) = (z_k-w_k)\int_0^1 E''_\rho(w_k + t(z_k-w_k))dt. 
\end{align*}
Let $\zeta = \zeta_k = \zeta_k(\rho) \coloneq z_k - w_k$, so we have that 
\begin{align*}
    |\zeta| &\leq C_R \frac{\rho \arcsinh k\pi\rho}{\sqrt{(k\pi\rho)^2 + 1}} \\
    &\leq C_R \rho,
\end{align*}
where we have used the fact $ \arcsinh u < \sqrt{u^2 + 1} $ for all $u>0$. Let us further assume $\rho < \frac{1}{C_R}$, which ensures $|\zeta| < 1$. This allows us to apply the same argument used to bound $M = \sup\limits_{|\zeta - w_k| \leq r_k} |E''_\rho(\zeta)|$ from Prop. \ref{zkrk_bound_thm} to see that for all $t \in [0,1]$,
\begin{align*}
    \left| E''_\rho(w_k + t\zeta)\right| &\leq 11((k\pi\rho)^2 + 1).
\end{align*}
Therefore,
\begin{subequations}
\begin{align}
    |E'_\rho(z_k) - E'_\rho(w_k)| &\leq 11|\zeta|((k\pi\rho)^2 + 1)  \nonumber\\
    &\leq 11C_R \rho \arcsinh( k\pi\rho) \sqrt{(k\pi\rho)^2 + 1}, \label{dEzkwk}
\end{align}
where we applied \eqref{zkrk_bound}.
Now apply \eqref{derho_wk_lowerbound} and Lemma \ref{arcsinhu_overu} to yield
\begin{align}
  \left| \frac{E'_\rho(z_k)- E'_\rho(w_k)}{E'_\rho(w_k)} \right| &=    \frac{|E'_\rho(z_k) - E'_\rho(w_k)|}{|E'_\rho(w_k)|}\nonumber \\
    &\leq \frac{11C_R}{4} \rho. \label{dEzkwk_asymp}
\end{align}
\end{subequations}
The estimate \eqref{dEzkwk_asymp} allows us to write
\begin{subequations}
\begin{align}
    E'_\rho(z_k) &= E'_\rho(w_k)(1 + \eta), \label{dErho1+eta}
\end{align}
for some $\eta \in \C$ such that $|\eta| \leq \frac{11C_R}{4}\rho$. In turn, we have
\begin{align}
    \frac{1}{E'_\rho(z_k)} &= \frac{1}{E_\rho'(w_k)(1+\eta)} \nonumber \\
    &= \frac{1}{E_\rho'(w_k)} \left(1 - \frac{\eta}{1+\eta} \right). \label{dErhoinverse1+eta}
\end{align}
Now let us assume $\rho$ is small enough to guarantee $|\eta| \leq \frac{1}{2}$. Then the reverse triangle inequality yields $|1 + \eta| \geq | |1| - |\eta|| = 1 - |\eta|$. Therefore we have the controlling inequality,
\begin{align}
    \left| \frac{\eta}{1 + \eta} \right| &\leq \frac{|\eta|}{1 - |\eta|} \nonumber\\
    &\leq \frac{ \frac{11}{4}C_R \rho}{1 - |\eta|} \nonumber\\
    &\leq \frac{11}{2}C_R \rho. \label{etacontrol}
\end{align}
\end{subequations}
Now see that by the sign and non-tangency conditions shown in Prop. \ref{jtildetransferthm} and the trivial estimate $|\Imz z| \leq |z|$,
\begin{align*}
    \Imz \frac{1}{E'_\rho(z_k)} &= \Imz \left[ \frac{1}{E'_\rho(w_k)} \left(1 + \frac{-\eta}{1+\eta} \right)\right] \\
    &= \Imz \frac{1}{E'_\rho(w_k)} + \Imz \left[ \frac{-\eta}{E'_\rho(w_k)(1+\eta)}\right] \\
    &\leq -\frac{\pi}{\sqrt{\pi^2 + 1}}\left| \frac{1}{E'_\rho(w_k)} \right| +  \left| \frac{\eta}{1+\eta} \right|\left| \frac{1}{E'_\rho(w_k)} \right|. \\
\end{align*}
Thus, we can guarantee $\Imz \frac{1}{E'_\rho(z_k)} < 0$ by assuming $\rho$ is small enough to guarantee $\left| \frac{\eta}{1+\eta} \right| < \frac{\pi}{\sqrt{\pi^2 + 1}}$.
To complete this proof, it now suffices to find $c_1,c_2 > 0$ such that
\begin{align*}
    \left| \Imz \frac{1}{E'_\rho(z_k)} \right| &\geq c_1 \left| \Imz \frac{1}{E'_\rho(w_k)} \right|
\end{align*}
and
\begin{align*}
    \left| \frac{1}{E'_\rho(w_k)} \right| \geq c_2 \left| \frac{1}{E'_\rho(z_k)} \right|,
\end{align*}
joining this chain by the non-tangency of $\frac{1}{E'_\rho(w_k)}$.
We have just seen
\begin{align*}
    \left| \Imz \frac{1}{E'_\rho(z_k)} \right| &= -\Imz \frac{1}{E'_\rho(z_k)} \\
    &\geq \left( \frac{\pi}{\sqrt{\pi^2 + 1}} - \left| \frac{\eta}{1+\eta} \right| \right) \left| \frac{1}{E'_\rho(w_k)} \right| \\
    &\geq \left( \frac{\pi}{\sqrt{\pi^2 + 1}} - \left| \frac{\eta}{1+\eta} \right| \right) \left| \Imz \frac{1}{E'_\rho(w_k)} \right|.
\end{align*}
Now we shrink $\rho$ further to ensure $|\frac{\eta}{1+\eta}| < \frac{1}{2}$, which allows us to take $c_1 = \frac{\pi}{\sqrt{\pi^2 + 1}} - \frac{1}{2} > 0$.
 
Furthermore, from \eqref{dErhoinverse1+eta},
\begin{align*}
\left| \frac{1}{E'_\rho(z_k)} \right| &= \left| 1 - \frac{\eta}{1+\eta} \right|\left| \frac{1}{E'_\rho(w_k)} \right|\\
&\leq \left(1 + \left|\frac{\eta}{1+\eta}\right| \right) \left| \frac{1}{E'_\rho(w_k)} \right| \\
&\leq \frac{3}{2}\left|\frac{1}{E'_\rho(w_k)} \right|,
\end{align*}
allowing us to take $c_2 = \frac{2}{3}$.
\end{proof}

\begin{prop}\label{Jasymptotics}
    As $\rho \to 0,$
    \begin{align*}
        J(\rho) \sim \tilde{J}(\rho).
    \end{align*}
 Therefore, we may rescale by $-4\pi(\rho^2 + 1)$ and apply the transfer theorem to conclude $I(\rho) \sim \frac{\log \frac{1}{\rho}}{\rho}$ as $\rho \to 0$.
\end{prop}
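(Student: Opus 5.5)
The plan is to compare $J$ and $\tilde{J}$ termwise, using the perturbation identity \eqref{dErhoinverse1+eta} obtained in the proof of Prop. \ref{jtransferthm}. For each $k$ write $\eta_k=\eta_k(\rho)$ for the number $\eta$ produced there. Then
\begin{align*}
    J(\rho) - \tilde{J}(\rho) = -\sum_{k=1}^\infty \frac{\eta_k}{1+\eta_k}\cdot\frac{1}{E'_\rho(w_k)}.
\end{align*}
The key point is that the controlling inequality \eqref{etacontrol}, $\left|\frac{\eta_k}{1+\eta_k}\right| \leq \frac{11}{2}C_R\rho$, holds uniformly in $k$ once $\rho$ is below a fixed threshold $\rho_0$. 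That threshold is the minimum of the thresholds from Prop. \ref{zkrk_bound_thm} and from the requirement $|\eta|\le \frac12$; none of these depend on $k$. Absolute convergence of both series follows from \eqref{derho_wk_lowerbound} and the bound $|1/E'_\rho(z_k)|\le \frac32|1/E'_\rho(w_k)|$. Together these justify the termwise subtraction, giving
\begin{align*}
    |J(\rho)-\tilde{J}(\rho)| \leq \tfrac{11}{2}C_R\,\rho \sum_{k=1}^\infty \left|\frac{1}{E'_\rho(w_k)}\right|.
\end{align*}

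Next I need to compare $\sum_k |1/E'_\rho(w_k)|$ with $|\tilde{J}(\rho)|$. A naive triangle inequality could lose everything to cancellation. This is where Prop. \ref{jtildetransferthm} enters, via the same trick as in the proof of Theorem \ref{transferthm}. All $\Imz \frac{1}{E'_\rho(w_k)}$ are negative, and each has modulus at least $c\,|1/E'_\rho(w_k)|$ with $c=\pi/\sqrt{\pi^2+1}$. Hence
\begin{align*}
    |\tilde{J}(\rho)| \geq |\Imz \tilde{J}(\rho)| = \sum_k \left|\Imz \frac{1}{E'_\rho(w_k)}\right| \geq c \sum_k \left|\frac{1}{E'_\rho(w_k)}\right|.
\end{align*}
Combining the two displays gives $|J-\tilde{J}| \leq \frac{11 C_R}{2c}\rho\, |\tilde{J}(\rho)|$, and $\rho\to0$ yields $J\sim\tilde{J}$. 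Note this does not even need the growth of $\tilde J$ from Lemma \ref{Itilde-asymp}; the relative error is $O(\rho)$ directly.

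For the final assertion, multiply $J$ and $\tilde{J}$ by the real nonzero factor $-4\pi(\rho^2+1)$, which preserves asymptotic equivalence. Then apply Theorem \ref{transferthm} with $a_k = 1/E'_\rho(z_k)$ and $b_k = 1/E'_\rho(w_k)$; its hypotheses on the $b_k$ are exactly Prop. \ref{jtildetransferthm}. This gives $I(\rho) = -4\pi(\rho^2+1)\Imz J \sim -4\pi(\rho^2+1)\Imz\tilde{J} = \tilde{I}(\rho)$. Lemma \ref{Itilde-asymp} and transitivity of $\sim$ then finish the proof of Theorem \ref{irho_asymptotics}.

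The main obstacle I anticipate is bookkeeping rather than estimates: confirming that every smallness condition on $\rho$ used in Props. \ref{zkrk_bound_thm} and \ref{jtransferthm} is genuinely independent of $k$, so that a single $\rho_0$ works for all terms at once. I would recheck the bound $|\zeta_k|\le C_R\rho$ and the use of Lemma \ref{arcsinhu_overu} in \eqref{dEzkwk_asymp} for this uniformity.
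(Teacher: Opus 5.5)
Your proposal is correct and is essentially the paper's own proof. Both write $\frac{1}{E'_\rho(z_k)} = \frac{1}{E'_\rho(w_k)}(1+\eta_k)$ with $|\eta_k| \le C_0\rho$ uniformly in $k$, then use the sign and non-tangency conditions of Prop.~\ref{jtildetransferthm} to bound $\sum_k |1/E'_\rho(w_k)|$ by $\frac{\sqrt{\pi^2+1}}{\pi}|\Imz \tilde{J}(\rho)| \le \frac{\sqrt{\pi^2+1}}{\pi}|\tilde{J}(\rho)|$, so the relative error is $O(\rho)$. Your explicit treatment of the $k$-uniform threshold, of absolute convergence, and of the final step (transfer theorem, then Lemma~\ref{Itilde-asymp} and transitivity) only spells out what the paper leaves implicit.
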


\begin{proof}
Let $\vep > 0$. We recall the proof of Lemma \ref{jtransferthm}, where we obtained a sequence of complex numbers $\eta_k = \eta_k(\rho)$ such that for all $\rho > 0$ sufficiently small,
\begin{align*}
    \frac{1}{E'_\rho(z_k)} &= \frac{1}{E'_\rho(w_k)}(1 + \eta_k),
\end{align*}
where $|\eta_k| \leq C_0\rho$ uniformly in $k$ and $C_0 > 0$ is an independent constant. Note that the $\eta_k$ here was written as $\frac{-\eta}{1+\eta}$ in the prior proof. Hence, we may apply the triangle inequality, Lemma \ref{jtildetransferthm}, and the trivial bound $|\Imz z| \leq |z|$ to see that
\begin{align*}
    |J(\rho) - \tilde{J}(\rho)| &\leq \sum\limits_{k=1}^{\infty} |\eta_k| \left| \frac{1}{E'_\rho(w_k)} \right| \\
    &\leq \frac{C_0 \rho\sqrt{\pi^2 + 1}}{\pi}  \sum\limits_{k=1}^\infty \left| \Imz \frac{1}{E'_\rho(w_k)} \right| \\
    &= \frac{C_0 \rho\sqrt{\pi^2 + 1}}{\pi}  \left| \Imz \sum\limits_{k=1}^{\infty} \frac{1}{E'_\rho(w_k)}\right| \\
    &\leq \frac{C_0 \rho\sqrt{\pi^2 + 1}}{\pi} |\tilde{J}(\rho)|.
\end{align*}
Now assume $\rho < \frac{\vep \pi}{C_0\sqrt{\pi^2 + 1}}$.
\end{proof}


\section*{Acknowledgements}
The author would like to thank Mike Law, Alon Lior, Gokul Nair, Tristan Ozuch, and Jacob Reznikov for their helpful conversations and insights. The author also thanks Tobias Colding for serving as sponsoring scientist for his NSF Postdoctoral Fellowship.
\\
\\
\noindent This work is partially funded by an NSF RTG grant entitled Dynamics, Probability and PDEs in Pure and Applied Mathematics, DMS-1645643. The author is also the recipient of an NSF Mathematical Sciences Postdoctoral Fellowship, DMS-2303384.

\bibliographystyle{alpha}
\bibliography{ref}
\appendix
\section{Bounding the integral over the other sides of the contour}
In this appendix, we calculate two limits regarding $\int_{\Gamma_{R}} M_\rho(z)dz$. It is clear that the contribution to the integral of the regularizing term $-\frac{1}{z^2}$ over $\Gamma_{R} / \Gamma_{R,1}$ vanishes as $R \to \infty$. Likewise, we may safely factor out the constant $\rho^2 + 1$.  So we use $M_\rho(z)$ to denote the unregularized M\"obius density
\begin{align*}
    M_\rho(z) = \frac{1}{\rho^2z^2 + 4\sin^2\frac{z}{2}}.
\end{align*}

We remark that Theorem \ref{erho_existence_uniqueness}, which describes the discrete distribution of the poles, is logically necessary to justify that the contours avoid singularities. The descriptive theorem is independent of this appendix, so there is no circular reasoning.

\begin{lemma}
    Under the definition of $\Gamma_R$ from Fig. \ref{Gamma_R}, $ \liminf\limits_{R \to \infty} \left| \int_{\Gamma_{R,2}} M_{\rho}(z)dz + \int_{\Gamma_{R,4}} M_{\rho}(z)dz \right| =  0$.
\end{lemma}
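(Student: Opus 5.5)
The plan is to evaluate the two vertical integrals along the specific sequence $R_n = (2n+1)\pi$, $n \in \N$, and show that each is $O(1/R_n)$. This gives $\liminf = 0$ directly; we do not even need the symmetry pairing of $\Gamma_{R,2}$ with $\Gamma_{R,4}$. I am reading Fig.~\ref{Gamma_R} as follows: $\Gamma_{R,2}$ is the vertical segment from $R$ to $R+iR$, and $\Gamma_{R,4}$ is the vertical segment from $-R+iR$ down to $-R$.

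Since $M_\rho$ is even and satisfies $M_\rho(\overline{z}) = \overline{M_\rho(z)}$, we have $M_\rho(-R+iy) = \overline{M_\rho(R+iy)}$. Hence
\begin{align*}
\int_{\Gamma_{R,2}} M_\rho(z)\,dz + \int_{\Gamma_{R,4}} M_\rho(z)\,dz = -2\int_0^R \Imz M_\rho(R+iy)\,dy,
\end{align*}
and it suffices to bound $\int_0^R |M_\rho(R+iy)|\,dy$.

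The key computation is at $R = R_n$. Write $z = R_n + iy$, so that $z/2 = (n+\frac{1}{2})\pi + \frac{iy}{2}$. Then $\sin\frac{z}{2} = \pm\cosh\frac{y}{2}$, and therefore $4\sin^2\frac{z}{2} = 2 + 2\cosh y$ is real and positive. This gives
\begin{align*}
\Rez\left(\rho^2 z^2 + 4\sin^2 \tfrac{z}{2}\right) = \rho^2(R_n^2 - y^2) + 2 + 2\cosh y.
\end{align*}
For $0 \le y \le R_n$ this is strictly positive. In particular the denominator does not vanish on $\Gamma_{R_n,2}$, so the contour avoids the poles, which is consistent with Prop.~\ref{erho_existence_uniqueness}.

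I then split the range of $y$:
\begin{enumerate}
\item On $0 \le y \le R_n/2$, the real part is at least $\frac{3}{4}\rho^2 R_n^2$. So $|M_\rho| \le \frac{4}{3\rho^2 R_n^2}$, and this piece contributes $O(1/(\rho^2 R_n))$.
\item On $R_n/2 \le y \le R_n$, the real part is at least $2\cosh y \ge e^{y}$. So $|M_\rho| \le e^{-y}$, and this piece contributes at most $e^{-R_n/2}$.
\end{enumerate}
The regularizing term $-1/z^2$ contributes at most $R_n \cdot R_n^{-2}$ on each segment, as already noted in the appendix. Letting $n \to \infty$ gives the claim, with $\rho$ fixed throughout.

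The only real obstacle is choosing $R$ so that the denominator stays uniformly large on the vertical segments. At $x = 2k\pi$ the term $4\sin^2\frac{z}{2}$ becomes $-4\sinh^2\frac{y}{2}$, which can cancel against $\rho^2 z^2$ near the poles. The choice $x = (2n+1)\pi$ avoids this cancellation because $\sin^2$ becomes a $\cosh^2$ there. This is also why the lemma is stated with $\liminf$ rather than $\lim$.
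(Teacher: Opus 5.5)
Your proof is correct. It follows the same overall plan as the paper: pick a discrete sequence of radii whose vertical sides avoid the poles, then estimate the vertical integrals directly. The choice of sequence and the type of estimate differ.

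\textbf{The paper's route.} The paper takes $R_k=(2k+\frac12)\pi$. There $\cos z=-i\sinh t$, so $4\sin^2\frac z2=2+2i\sinh t$, and its estimate uses only the crude lower bound $\rho^2(R_k^2-t^2)+2\ge 2$ for the real part of the denominator. It therefore leans on the $\Gamma_{R,2}$/$\Gamma_{R,4}$ pairing to reduce to $\Imz M_\rho$. It bounds $2|\Imz M_\rho|$ by $B/(1+B^2)$, where $B$ is half the imaginary part of the denominator. It then needs a split at $t=1$ plus an AM--GM estimate to show $\int_0^\infty B/(1+B^2)\,dt\to 0$. The paper's $B$ should read $\rho^2R_kt+\sinh t$ rather than $R_kt+\sinh t$; this is harmless for fixed $\rho$.

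\textbf{Your route.} Your choice $R_n=(2n+1)\pi$ corresponds, in the rescaled variable, exactly to the boundaries of the strips $S_k$, where the paper already notes $f$ has no zeros. It makes $4\sin^2\frac z2=2+2\cosh y$ real and positive. The real part of the denominator then dominates both $\rho^2(R_n^2-y^2)$ and $2\cosh y$, and your split at $y=R_n/2$ exploits the large term $\rho^2(R_n^2-y^2)$ that the paper's bound discards.

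\textbf{What your route gains.} It buys several things:
\begin{enumerate}
\item a bound on $|M_\rho|$ itself, with the explicit rate $\int_0^{R_n}|M_\rho(R_n+iy)|\,dy\le \frac{2}{3\rho^2R_n}+e^{-R_n/2}$ for the unregularized part;
\item pole avoidance checked by hand, with no appeal to Prop.~\ref{erho_existence_uniqueness};
\item the stronger conclusion that each vertical integral tends to zero separately, so the symmetry pairing is genuinely optional, as you say.
\end{enumerate}
Overall your argument is shorter and sharper. The paper's pairing-plus-imaginary-part argument is serviceable but does more work than the situation requires.
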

\begin{proof}
Note that $M_\rho$ is an even function in $z$. Writing $z = x + iy$, we have $M_\rho(-x+iy) = M_\rho(x -iy) = M_\rho(\overline{z}).$ Writing out the integral over $\Gamma_{R,2}$ yields
\begin{align*}
\int_{\Gamma_{R,2}} M_\rho(z)dz &= i\int_0^R M_\rho(R + it)dt. 
\end{align*}
\noindent Likewise, writing out the integral over $\Gamma_{R,4}$ yields
\begin{align*}
    \int_{\Gamma_{R,4}} M_\rho(z)dz &= -i \int_0^R M_\rho(-R + iR -it)dt \\
    &= i \int_R^0 M_\rho(-R + is)ds \\
    &= -i \int_0^R M_\rho(\overline{R + it})dt\\
    &= -\int_{\Gamma_{R,2}} M_\rho(\overline{z})dz.
\end{align*}
\noindent As $M_\rho(\overline{z}) = \overline{M_\rho(z)}$, we have 
\begin{align*}
    \left| \int_{\Gamma_{R,2}} M_{\rho}(z)dz + \int_{\Gamma_{R,4}} M_{\rho}(z)dz \right| &= \left| \int_{\Gamma_{R,2}} M_\rho(z) - M_\rho(\overline{z}) dz \right| \\
    &\leq 2 \int_{\Gamma_{R,2}} \left|  \Imz M_{\rho}(z) \right| |dz|. \\
\end{align*}

Now let $R_k = (2k + \frac{1}{2})\pi$. Note Prop. \ref{erho_existence_uniqueness} implies there are no poles of $M_\rho$ on the nonreal segments of $\Gamma_{R_k}$. Observe for $z = R_k + it \in \Gamma_{R_k,2}$, 
\begin{align*}
    M_\rho(z) &= \frac{1}{[\rho^2(R_k^2 - t^2) + 2] + 2i[tR_k + \sinh(t)]}.
\end{align*}
\noindent As $\Imz \frac{1}{a+bi} = \frac{-b}{a^2 + b^2}$, we have
\begin{align*}
   2 \left| \Imz M_\rho(z) \right| &\leq \frac{tR_k + \sinh(t)}{1 + (tR_k + \sinh(t))^2}.
\end{align*}
\noindent It now suffices to show $$\lim\limits_{R \to \infty} \int_0^{\infty} \frac{Rt + \sinh(t)}{1 + (Rt + \sinh(t))^2}dt = 0.$$

Upon splitting up the integral's domain, we can see
\begin{align*}
    \int_0^1 \frac{Rt + \sinh(t)}{1 + (Rt + \sinh(t))^2}dt &\leq \int_0^1 \frac{Rt + \sinh(1)}{1 + (Rt)^2} dt\\
    &= \frac{\log(R^2 + 1)}{2R} + \frac{\arctan(R)}{R}\sinh(1) \to 0,
\end{align*}
\noindent as $R \to \infty$. For the integral over $[1,\infty)$, we first note that for any $u > 0$, we have $\frac{u}{1+u^2} \leq \frac{1}{u}$. Next, by the AM-GM inequality, $Rt + \sinh(t) \geq 2\sqrt{Rt\sinh(t)}$. Therefore, 
\begin{align*}
    \int_1^{\infty} \frac{Rt + \sinh(t)}{1 + (Rt + \sinh(t))^2}dt &\leq \int_1^{\infty} \frac{dt}{Rt + \sinh(t)}\\
    &\leq \frac{1}{2\sqrt{R}} \int_1^{\infty} \frac{dt}{\sqrt{t\sinh(t)}} \to 0,
\end{align*}
\noindent as $R \to \infty$, as the last improper integral converges.
\end{proof}

Extending this limit inferior to a bona fide limit as $R \to \infty$ is now seen by the invariance of the Cauchy integral formula with respect to the homotopy of contours not passing through singularities. In other words, the quantity $\left| \int_{\Gamma_{R,2}} + \int_{\Gamma_{R,4}} M_\rho(z)dz \right|$ is an eventually decreasing step function of $R$, with a discrete set of jumps, corresponding to when the contour engulfs a new pole. 

\begin{lemma}
    Under the definition of $\Gamma_R$ from Fig. \ref{Gamma_R}, $ \lim\limits_{R \to \infty} \left| \int_{\Gamma_{R,3}} M_{\rho}(z)dz \right| =  0$.
\end{lemma}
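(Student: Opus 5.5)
We need to show the integral of $M_\rho(z)=1/(\rho^2z^2+4\sin^2\frac z2)$ over the arc $\Gamma_{R,3}$ tends to zero. Recall that $\Gamma_{R,3}$ is the arc of the circle $|z|=R\sqrt2$ joining $R+iR$ to $-R+iR$. It therefore lies in the region $\Imz z\ge R$, and its length is $\frac{\pi}{2}R\sqrt2$. The plan is a pointwise lower bound $|E_\rho(z)|\ge c e^{\Imz z}$ on the arc, valid for $R$ large. Such a bound makes the integrand exponentially small, so the length factor is irrelevant.

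Write $z=x+iy$ with $y\ge R$. We have $4\sin^2\frac z2 = 2-2\cos z = 2 - e^{-iz}-e^{iz}$ and $|e^{-iz}|=e^{y}$, so
\begin{align*}
|4\sin^2\tfrac z2| \geq e^{y} - 2 - e^{-y}.
\end{align*}
Meanwhile $|\rho^2 z^2| = 2\rho^2R^2$ on the arc. Since $y\ge R$, for $R$ large (depending on $\rho$) the quantity $e^{y}\ge e^R$ dominates $2\rho^2R^2+3$. Hence
\begin{align*}
|E_\rho(z)| \ge e^{y}-2-e^{-y}-2\rho^2R^2 \ge \tfrac12 e^{R}
\end{align*}
once $e^R\ge 2(2\rho^2R^2+3)$. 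This also shows there are no poles on the arc for large $R$. That is consistent with Prop. \ref{erho_existence_uniqueness}: the roots have imaginary parts growing only logarithmically, whereas the arc sits at height at least $R$.

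The ML-inequality then gives
\begin{align*}
\left|\int_{\Gamma_{R,3}} M_\rho(z)\,dz\right| \le \frac{\pi R\sqrt2}{2}\cdot 2e^{-R}\to 0.
\end{align*}
The regularizing term $-\frac1{z^2}$, together with the factor $\rho^2+1$, is handled trivially: it contributes $O(R\cdot R^{-2})=O(1/R)$. There is no real obstacle here. The only care needed is to get the correct sign in the bound $|e^{-iz}|=e^{\Imz z}$ and to check that the threshold for $R$ depends only on $\rho$. Unlike the side contours, no liminf/subsequence device is needed, so this is a genuine limit.
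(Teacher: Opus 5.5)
Your proof is correct and follows essentially the same route as the paper: the ML-inequality on the arc of length $\frac{\pi}{2}R\sqrt2$, with $|e^{-iz}| = e^{\Imz z} \ge e^{R}$ dominating both $|\rho^2 z^2| = 2\rho^2R^2$ and the bounded terms. Your decay rate $2e^{-R}$ is the accurate one; the paper's stated bound $Ce^{-R\sqrt2}$ overstates the decay at the endpoints $\pm R + iR$, where $\Imz z = R$, though the conclusion is unaffected.
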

\begin{proof}
The circular arc $\Gamma_{R,3}$ can be parametrized by $z = R\sqrt{2}e^{i\theta}$, where $\frac{\pi}{4} \leq \theta \leq \frac{3\pi}{4}$. We note that
\begin{align*}
    \left| \int_{\Gamma_{R,3}} M_{\rho}(z)dz \right| &\leq \frac{\pi R \sqrt{2}}{2} \sup\limits_{\theta \in [\frac{\pi}{4},\frac{3\pi}{4}]} |M_\rho(R\sqrt{2}e^{i\theta})|.
\end{align*}
\noindent Based on Equation \eqref{mrho2}, we will write
\begin{align*}
    M_\rho(z) &= \frac{1}{\rho^2z^2 + 4\sin^2(\frac{z}{2})} \\
    &= \frac{1}{\rho^2z^2 + 2 - e^{iz} - e^{-iz}}.
\end{align*}

For all $z \in \Gamma_{R,3}$, the modulus of $\rho^2z^2$ is $2R^2\rho^2$. Writing $z = R\sqrt{2}(\cos \theta + i \sin\theta),$ we have that $-e^{iz}$ has modulus $e^{-R\sqrt{2}\sin\theta}$ whilst $-e^{-iz}$ has modulus $e^{R\sqrt{2}\sin \theta}$. Note that $\frac{\sqrt{2}}{2} \leq \sin \theta \leq 1$. Without this bound, this proof would fail.

Hence, for $R$ sufficiently large (depending on $\rho$, which is fixed), we have $\sup\limits_{\theta \in [\frac{\pi}{4},\frac{3\pi}{4}]} |M_\rho(R\sqrt{2}e^{i\theta})| \leq Ce^{-R\sqrt{2}}$, and so it follows that $\left| \int_{\Gamma_{R,3}} M_{\rho}(z)dz \right| \to 0$ as $R \to \infty$. \end{proof}

\Addresses
\end{document}